\newtheorem{theorem}[subsection]{Theorem}
\newtheorem{lemma}[subsection]{Lemma}
\newtheorem{proposition}[subsection]{Proposition}
\newtheorem{corollary}[subsection]{Corollary}
\newtheorem{definition}[subsection]{Definition}
\numberwithin{equation}{section} \setcounter{tocdepth}{1}
\newcommand{\bea}{\begin{eqnarray}}
\newcommand{\eea}{\end{eqnarray}}
\def \> {\Rightarrow}
\def \0 {\emptyset}
\DeclareMathOperator{\inte}{int}
\DeclareMathOperator{\lcm}{lcm}
\DeclareMathOperator{\supp}{supp}
\DeclareMathOperator{\ord}{ord}
\DeclareMathOperator{\Per}{Per}
\begin{document}

\title[On a family of non-Volterra quadratic operators acting on a simplex]{On a family of non-Volterra quadratic operators acting on a simplex}

\author{Uygun Jamilov  \orcidlink{0000-0002-8860-2844},  Manuel Ladra \orcidlink{0000-0002-0543-4508}}

 \address{ U.\ U.\ Jamilov\\ V.I. Romanovskiy Institute of Mathematics,  Uzbekistan Academy of Sciences,
81, Mirzo-Ulugbek str., 100170, Tashkent, Uzbekistan.}
\email{jamilovu@yandex.ru}

\address{M.\ Ladra\\ University of Santiago de Compostela, 15782, Santiago de Compostela, Spain.}
 \email{manuel.ladra@usc.es}

\begin{abstract}
In the present paper, we consider a convex combination of non-Volterra quadratic
stochastic operators defined on a finite-dimensional simplex depending on a parameter
$\alpha$ and study their trajectory behaviors. We showed that for any $\alpha\in [0,1)$
the trajectories of such operator converge to a fixed point. For $\alpha=1$
any trajectory of the operator converges  to a periodic trajectory.
\end{abstract}

\subjclass[2010] {Primary 37N25, Secondary 92D10.}

\keywords{quadratic stochastic operator, Volterra and non-Volterra operator.}

\maketitle

\section{Introduction}

The evolution of a population can be studied by a dynamical system of a quadratic
stochastic operator \cite{L}. Such evolution operators frequently arise in many models of
mathematical genetics, namely theory of heredity (see e.g. \cite{Ber,GGJ,GSN,GZ,Gdan,Gsb,GMR,Jdnc,Jlob,Jjph,JL,JLM,K,L,RZhmn,RZhukr,ZhRsb}).

Let $E =\{1,\dots,m\}$ be a finite set and the set of all probability distributions on $E$
\begin{equation*}
S^{m-1} =\{\mathbf{x} = (x_1, x_2,\dots,x_m)\in \mathbb{R}^m : \ x_i \geq 0, \text{ for any } i  \, \text { and } \sum_{i=1}^m x_i=1\},
\end{equation*}
 the $(m-1)$-dimensional simplex.

A \emph{quadratic stochastic operator} (QSO) is a mapping $V \colon S^{m-1}\to S^{m-1}$ of the simplex
into itself, of the form $V(\mathbf{x})=\mathbf{x}'\in S^{m-1}$, where
\begin{equation}\label{qso}
V: x'_k=\sum_{i,j=1}^m p_{ij,k}x_ix_j, \quad k=1,\dots,m
\end{equation}
and the coefficients $p_{ij,k}$ satisfy
\begin{equation}\label{qsoh}
p_{ij,k}=p_{ji,k}\geq0, \quad  \sum_{k=1}^m p_{ij,k}=1, \quad i,j,k\in E.
\end{equation}

The trajectory $\{\mathbf{x}^{(n)}\}, n = 0,1,2,\dots$, of $V$ for an initial point $\mathbf{x}^{(0)}\in S^{m-1}$
is defined by
\[\mathbf{x}^{(n+1)} = V \big(\mathbf{x}^{(n)}\big)= V^{n+1}\big(\mathbf{x}^{(0)}\big), \ \ n = 0,1,2,\dots\]
Denote by $\omega_V \big(\mathbf{x}^{(0)}\big)$ the set of limit points of the trajectory $\{\mathbf{x}^{(n)}\}_{n = 0}^\infty$.

The main problem in mathematical biology consists of the study of the asymptotical
behavior of the trajectories for a given QSO. In other words, the main task is the description
of the set $\omega_V \big(\mathbf{x}^{(0)}\big)$ for any initial point $\mathbf{x}^{(0)}\in S^{m-1}$ for a given QSO. This problem
is an open problem even in two-dimensional case.

A QSO $V$ is called \emph{regular} if there is the limit $\lim\limits_{n\rightarrow\infty} V^n(\mathbf{x})$
 for any initial $\mathbf{x}\in S^{m-1}$.

A QSO $V$ is said to be \emph{ergodic} if the limit
\[\lim\limits_{n\rightarrow\infty} \frac{1}{n} \sum_{k=0}^{n-1} V^k(\mathbf{x})\]
exists for any $\mathbf{x}\in S^{m-1}$.

A \emph{Volterra} QSO is defined by \eqref{qso}, \eqref{qsoh} and with the additional
assumption
\begin{equation}\label{cv}
p_{ij,k} = 0 \ \ \text{if} \ \  k\notin \{i,j\},  \ \ i,j,k \in E.
\end{equation}
The biological treatment of conditions \eqref{cv} is rather precise: the offspring repeats the
genotype of one of its parents.

 Based on numerical calculations, Ulam conjectured that any QSO is ergodic\cite{U}.
 But in 1977 in \cite{Zus}, Zakharevich considered the following  Volterra QSO on $S^2$
\begin{equation}\label{opz}
x'_1=x_1^2+2x_1x_2, \quad x'_2=x_2^2+2x_2x_3, \quad x'_3=x_3^2+2x_1x_3
\end{equation}
and showed that it is a non-ergodic transformation, that is he proved that Ulam's conjecture
is false in general. Note that the Zakharevich's QSO \eqref{opz}  is also known as
the Stein Ulam Spiral map in some references.

Later in \cite{GZ}, the Zakharevich's result was generalized to a class of Volterra QSOs defined on $S^2$.
 In \cite{Smn}, the results of \cite{GZ}, \cite{Zus} were generalized to a class of Lotka-Volterra operators.
 In \cite{GGJ}, we  have shown the correlation between
non-ergodicity of Volterra QSOs and rock-paper-scissors games, and
in \cite{JSW}, the random dynamics of Volterra QSOs was studied.

For a recent review on the theory of quadratic stochastic operators see \cite{GMR}.

 In the present paper, we consider a specific family of discrete-time dynamical systems
generated by a convex combination of quadratic stochastic operators. The article is organized as follows.
In Section~\ref{sec:preliminaries}, we recall the definitions and known results related to a
convex combination of quadratic stochastic operators.
In Section~\ref{sec:regularoperator}, we study the asymptotical behavior of  the trajectories of a non-Volterra QSO defined on
a finite-dimensional simplex. It is shown that for an arbitrary initial point, the trajectory converges to the center
of the simplex.
In Section~\ref{sec:periodicoperator}, we consider a quasi strictly non-Volterra QSO
on a finite-dimensional simplex. We showed that almost (w.r.t. Lebesgue measure) all trajectories converge
to a periodic trajectory.
In Section~\ref{sec:convexcombination},
we consider a convex combination of QSOs which are studied in the previous two sections, study its basic properties, and therein we show that almost (w.r.t. Lebesgue measure) all trajectories convergent.

\section{Preliminaries and known results}
\label{sec:preliminaries}

Let $V$ be a quadratic stochastic operator.
A point $\mathbf{x}\in S^{m-1}$ is called a \emph{periodic} point of $V$ if there exists
an $n$  such that $V^n (\mathbf{x}) = \mathbf{x}$. The smallest positive integer $n$ satisfying the above is called
the \emph{prime period} or \emph{least period} of the point $\mathbf{x}$. A period-one point  is called a \emph{fixed} point of  $V$.
Denote the set of all periodic points (not necessarily prime) of period $n$ by $\Per_n(V)$.

Let $D V (\mathbf{x}^*) = \big(\partial V_i/\partial x_j(\mathbf{x}^*)\big)_{i,j=1}^{\ \ m}$ be the Jacobi matrix of operator $V$ at the point $\mathbf{x}^*$.

A fixed point $\mathbf{x}^*$ is called \emph{hyperbolic} if its Jacobi matrix $D V (\mathbf{x}^*)$ has no eigenvalues 1
in absolute value. A hyperbolic fixed point $\mathbf{x}^*$ is called \emph{attracting} (resp. \emph{repelling}) if all
the eigenvalues of the Jacobi matrix $D V (\mathbf{x}^*)$ are less (resp. greater) than 1 in absolute
value; it is called a  \emph{saddle} if some of the eigenvalues of $D V (\mathbf{x}^*)$ are less than 1 in absolute
value and other eigenvalues are greater than 1 in absolute value (see \cite{Dev}).

A continuous function $\varphi \colon S^{m-1} \to \mathbb{R}$ is called a \emph{Lyapunov function} for an operator $V$ if
$\varphi(V (\mathbf{x})) \leq \varphi(\mathbf{x})$ for all $\mathbf{x}$ (or $\varphi(V (\mathbf{x})) \geq \varphi(\mathbf{x})$ for all $\mathbf{x}$).

Denote $\inte S^{m-1} = \{\mathbf{x}\in S^{m-1} : x_1x_2\cdots x_m > 0\}$ and $\partial S^{m-1} = S^{m-1}\setminus  \inte S^{m-1}$.

S.S. Vallander  studied in [21] a family of quadratic stochastic operators $V_\theta \colon S^2 \to S^2$ defined
by $V_\theta = \theta V_1 + (1 -\theta)V_0, \ \ \theta \in [0, 1]$, where
\[V_0(\mathbf{x}) = \big(x^2_1+ 2x_2x_3, \,  x^2_2+ 2x_1x_3, \,  x^2_3 + 2x_1x_2\big),\]
\[V_1(\mathbf{x}) = \big(x^2_1+ 2x_1x_2, \,  x^2_2+ 2x_1x_3, \,  x^2_3 + 2x_2x_3\big).\]
It is proven that the vertices $\mathbf{e}_1 = (1, 0, 0), \, \mathbf{e}_2 = (0, 1, 0), \, \mathbf{e}_3 = (0, 0, 1)$ of the simplex and the
center $\mathbf{c} = (1/3,1/3,1/3)$ are fixed points of the operator $V_\theta$, that is, they are solutions of the equation
$V_\theta(\mathbf{x}) =\mathbf{x}$. The set $S^2 \cap \{\mathbf{x}\in S^2 : x_1 = x_3\}$ is an invariant set respect to $V_\theta$. There exists a
critic value $\theta_{cr} = 3/4$ and there is a line consisting from the fixed points of $V_\theta$ whenever
$\theta = \theta_{cr}$.

\begin{theorem}[\cite{Vlen}] For the operator $V_\theta$ the following statements are true:
\begin{itemize}
\item[i)] if $0\leq\theta < \theta_{cr}$, then $\omega_{V_\theta} \big(\mathbf{x}^{(0)}\big)=\{\mathbf{c}\}$ for any $\mathbf{x}^{(0)}\in S^2$ except the vertices;
\item[ii)] if $\theta = \theta_{cr}$, then $S^2=\mathop{\cup}\limits_{c\in[-1,1]}\{\mathbf{x}\in S^2: x_1-x_3=c\}$ and
\[\omega_{V_\theta} \big(\mathbf{x}^{(0)}\big)=\Big\{\Big(\frac{1+3c+\sqrt{1+3c^2}}{6},\frac{2-\sqrt{1+3c^2}}{3}, \frac{1-3c+\sqrt{1+3c^2}}{6}\Big)\Big\}\]
for any initial $\mathbf{x}^{(0)}\in\{\mathbf{x}\in S^2: x_1-x_3=c\}$;
\item[iii)] if $\theta_{cr}<\theta\leq 1$, then
\[\omega_{V_\theta} \big(\mathbf{x}^{(0)}\big)=\begin{cases}
\{\mathbf{e}_1\},& \text{if} \quad x_1^{(0)}> x_3^{(0)},\\
\{\mathbf{c}\},& \text{if} \quad x_1^{(0)}= x_3^{(0)}>0,\\
\{\mathbf{e}_3\},& \text{if} \quad x_1^{(0)}< x_3^{(0)}.\\
\end{cases}\]
\end{itemize}
\end{theorem}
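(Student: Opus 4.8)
The plan is to change coordinates so that the direction transversal to the invariant set $\{x_1=x_3\}$ is (almost) diagonalised. Put $u=x_1-x_3$ and keep $x_2$, so that $x_1=\tfrac12(1-x_2+u)$, $x_3=\tfrac12(1-x_2-u)$; a direct computation from $V_\theta=\theta V_1+(1-\theta)V_0$ turns $V_\theta$ into
\[
x_2'=\tfrac12\bigl(3x_2^2-2x_2+1-u^2\bigr),\qquad u'=u\bigl(1+(4\theta-3)x_2\bigr)
\]
on the triangle $\{x_2\ge 0,\ |u|\le 1-x_2\}$. Two features organise the argument: the $x_2$-equation is independent of $\theta$, and the evolution of $u$ is governed entirely by the sign of $4\theta-3$, i.e.\ by the position of $\theta$ relative to $\theta_{cr}=3/4$. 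I would first record the fixed points $\mathbf{c}=(x_2,u)=(1/3,0)$, $\mathbf{e}_2=(1,0)$, $\mathbf{e}_1=(0,1)$, $\mathbf{e}_3=(0,-1)$ and their Jacobians in these coordinates: $\operatorname{diag}(0,4\theta/3)$ at $\mathbf{c}$, $\operatorname{diag}(2,4\theta-2)$ at $\mathbf{e}_2$, eigenvalues $\pm 2\sqrt{1-\theta}$ at $\mathbf{e}_1$ and $\mathbf{e}_3$. Hence for $\theta<\theta_{cr}$ the centre is attracting and every vertex has an eigenvalue $>1$ in modulus (the stable eigendirection, when one exists, pointing out of $S^2$), while for $\theta>\theta_{cr}$ the vertices $\mathbf{e}_1,\mathbf{e}_3$ are attracting and $\mathbf{c}$ becomes a saddle whose stable set is the invariant segment $\{u=0\}=\{x_1=x_3\}$. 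On that segment the map is $g(x_2)=\tfrac12(3x_2^2-2x_2+1)$, with fixed points $x_2=1/3$ (attracting) and $x_2=1$ (repelling); since $g(x_2)\le x_2$ on $[1/3,1]$ and $g([0,1/3))\subset(1/3,1/2]$, every orbit in $[0,1)$ converges to $1/3$, which already gives the ``$x_1=x_3>0$'' alternative in (i) and (iii).

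For (iii), $\theta_{cr}<\theta\le 1$: here $1+(4\theta-3)x_2\ge 1$, so $|u^{(n)}|$ is nondecreasing and $\operatorname{sgn}u^{(n)}$ is constant. Thus $|u|$ is constant on $\omega_{V_\theta}(\mathbf{x}^{(0)})$, and invariance of this set forces $1+(4\theta-3)x_2=1$, i.e.\ $x_2=0$, at each of its points. So $\omega_{V_\theta}(\mathbf{x}^{(0)})$ lies on the edge $\{x_2=0\}$; but there $V_\theta$ acts by $(x_1,0,x_3)\mapsto(x_1^2,2x_1x_3,x_3^2)$, which leaves the edge except at $\mathbf{e}_1,\mathbf{e}_3$, so the only compact invariant subsets of the edge are $\{\mathbf{e}_1\}$ and $\{\mathbf{e}_3\}$. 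A singleton $\omega$-limit set is a fixed point, and the preserved sign of $u^{(n)}=x_1^{(n)}-x_3^{(n)}$ selects $\mathbf{e}_1$ if $x_1^{(0)}>x_3^{(0)}$ and $\mathbf{e}_3$ if $x_1^{(0)}<x_3^{(0)}$; when $x_1^{(0)}=x_3^{(0)}>0$ we are on $\{u=0\}$ and the first paragraph gives $\mathbf{c}$.

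For (ii), $\theta=\theta_{cr}$: the factor degenerates and $u^{(n)}\equiv c:=x_1^{(0)}-x_3^{(0)}$ is a first integral, so the orbit stays on $\{x_1-x_3=c\}$ and obeys the single quadratic map $h_c(x_2)=\tfrac12(3x_2^2-2x_2+1-c^2)$ on $[0,1-|c|]$. I would check that $h_c$ maps this interval into itself, that its only fixed point there is $x_2^\ast=\tfrac13(2-\sqrt{1+3c^2})$ (the other root $\tfrac13(2+\sqrt{1+3c^2})$ exceeding $1-|c|$ unless $c=0$), and that $h_c$ has no $2$-periodic point — from $h_c(p)-h_c(q)=\tfrac12(p-q)(3(p+q)-2)$ a genuine $2$-cycle would need $p+q=0$, impossible for $p,q\ge 0$. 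Together with the unimodality of $h_c$ this yields $x_2^{(n)}\to x_2^\ast$, whence $x_1^{(n)}=\tfrac12(1-x_2^{(n)}+c)\to\tfrac16(1+3c+\sqrt{1+3c^2})$ and $x_3^{(n)}\to\tfrac16(1-3c+\sqrt{1+3c^2})$, which is the stated limit point.

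For (i), $0\le\theta<\theta_{cr}$: the goal is $u^{(n)}\to 0$ and $x_2^{(n)}\to 1/3$ for every non-vertex $\mathbf{x}^{(0)}$. From $u^{(n)}=u^{(0)}\prod_{k<n}(1+(4\theta-3)x_2^{(k)})$, the constraint $|u^{(k)}|\le 1-x_2^{(k)}$, and the fact that $g(x_2)<x_2$ on $(1/3,1)$ forces $x_2^{(k)}$ to decrease strictly whenever it exceeds $1/3$, one argues the product tends to $0$ unless $x_2^{(k)}\to 0$; and $x_2^{(k)}\to 0$ would force, through the $x_2$-equation, $|u^{(k)}|\to 1$, i.e.\ $\mathbf{x}^{(k)}\to\mathbf{e}_1$ or $\mathbf{e}_3$, impossible for a non-vertex start since those vertices are repellers with $V_\theta^{-1}(\mathbf{e}_i)=\{\mathbf{e}_i\}$. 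Hence $u^{(n)}\to 0$, $\omega_{V_\theta}(\mathbf{x}^{(0)})\subseteq\{x_1=x_3\}$, and on this segment $g$ has $\mathbf{c}$ as its only attractor; ruling out $\mathbf{e}_2\in\omega_{V_\theta}(\mathbf{x}^{(0)})$ by its expanding $x_2$-eigendirection leaves $\omega_{V_\theta}(\mathbf{x}^{(0)})=\{\mathbf{c}\}$. The main obstacle is precisely this last step for small $\theta$: when $\theta<1/4$ the factor $1+(4\theta-3)x_2$ can have modulus larger than $1$ near the $\mathbf{e}_2$-corner, so $|u^{(n)}|$ need not be monotone, and one has to show a non-vertex orbit neither lingers in $\{x_2>1/3\}$ (the would-be limiting relations $g(\ell)-\ell=\tfrac12u_\infty^2\ge 0$ and $\ell=2/(3-4\theta)$ are incompatible since $g(\ell)<\ell$ there) nor accumulates at a vertex; the non-hyperbolic parameter $\theta=1/4$ (eigenvalue $-1$ at $\mathbf{e}_2$) needs a separate second-order check, while $\theta=3/4$ is case (ii), already handled directly.
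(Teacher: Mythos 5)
This theorem is not proved in the paper at all: it is quoted from Vallander \cite{Vlen} as a known result in the preliminaries, so there is no in-house argument to compare yours against. Judged on its own merits, your reduction to the coordinates $(x_2,u)$ with $u=x_1-x_3$ is correct --- I checked $x_2'=\tfrac12(3x_2^2-2x_2+1-u^2)$, $u'=u(1+(4\theta-3)x_2)$, the Jacobian spectra you list, and the identity $h_c(p)-h_c(q)=\tfrac12(p-q)(3(p+q)-2)$ --- and parts (ii) and (iii) are essentially complete. For (ii) you should either cite or prove for this particular unimodal map the fact that an interval self-map with no period-two orbit has all orbits converging to fixed points; note also that the statement of (ii) silently excludes $\mathbf{e}_2$, which lies on the line $c=0$ and is itself fixed.

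Part (i) is where the write-up falls short of a proof: two steps are asserted rather than established. First, ``the product tends to $0$ unless $x_2^{(k)}\to 0$''. The clean route is to observe that $\{x_2\le 1/2\}$ is absorbing: $x_2'\le g(x_2)$, $g$ maps $[0,1/2]$ into itself, and $g(x)<x$ on $(1/2,1)$ forces every non-$\mathbf{e}_2$ orbit to enter $\{x_2\le 1/2\}$ and stay there; on that set every factor satisfies $|1+(4\theta-3)x_2|\le 1$ because $|2\theta-\tfrac12|\le\tfrac12$, which disposes of your worries about $\theta<1/4$ and the separate check at $\theta=1/4$, and reduces the claim to the dichotomy $\sum_k x_2^{(k)}=\infty$ (product goes to $0$) versus $x_2^{(k)}\to 0$. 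Second, the passage from $u^{(n)}\to 0$ to $x_2^{(n)}\to 1/3$: the orbit is only asymptotically on the segment $\{u=0\}$, so ``$g$ has $\mathbf{c}$ as its only attractor there'' does not apply directly; either invoke strong invariance of the $\omega$-limit set and classify the compact $g$-invariant subsets of $[0,1/2]$, or, more simply, use $g(x)-\tfrac13=\tfrac32(x-\tfrac13)^2$, which for $x_2\le 1/2$ gives $|x_2'-\tfrac13|\le\tfrac12|x_2-\tfrac13|+\tfrac12 u^2\to 0$. Finally, from $x_2^{(k)}\to 0$ you only get $\operatorname{dist}\big(\mathbf{x}^{(k)},\{\mathbf{e}_1,\mathbf{e}_3\}\big)\to 0$, not convergence to a single vertex; this is repaired either by the eventual monotonicity of $|u^{(n)}|$ (which would force $|u^{(n)}|\equiv 1$, hence the orbit sits at a vertex, contradicting $V_\theta^{-1}(\mathbf{e}_i)=\{\mathbf{e}_i\}$) or by the adapted-metric repeller argument applied to a neighborhood of the pair of vertices. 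None of this is a wrong idea --- every gap is fixable along the lines you indicate --- but as written part (i) is a sketch, not a proof.
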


R.N. Ganikhodjaev  considered  in \cite{Gdan} a family of quadratic operators $V_\lambda \colon S^2 \to S^2$ defined
by $V_\lambda = \lambda V_2 + (1 -\lambda)V_0, \  \lambda \in [0, 1]$, where
\[V_2(\mathbf{x}) = \big(x^2_1+ 2x_1x_2, \,  x^2_2+ 2x_2x_3, \,  x^2_3 + 2x_1x_3\big).\]

\begin{theorem}[\cite{Gdan}]   For the operator $V_\lambda$ the following statements are true:
\begin{itemize}
\item[i)] the vertices $\mathbf{e}_1, \mathbf{e}_2, \mathbf{e}_3$ and the center $\mathbf{c}$ are fixed points;
\item[ii)] if $\lambda > 1/2$, then the vertices  $\mathbf{e}_1, \mathbf{e}_2, \mathbf{e}_3$ are repelling points and they are saddles
when $\lambda < 1/2$;
\item[iii)] if $\lambda > 1 -\sqrt{3}/2$, then the center $\mathbf{c}$ is an attracting point and it is a repelling point
when $\lambda < 1 -\sqrt{3}/2$.
\item[iv)] if $\lambda > 1/2$, then  $\omega_{V_\lambda} \big(\mathbf{x}^{(0)}\big)=\{\mathbf{c}\}$ for any $\mathbf{x}^{(0)}\in S^2\setminus\{\mathbf{e}_1, \mathbf{e}_2, \mathbf{e}_3\}$;
\item[v)] if $0 < \lambda < 1 -\sqrt{3}/2$, then $\omega_{V_\lambda} \big(\mathbf{x}^{(0)}\big)\subset \inte  S^2 $ is an infinite set for any
$\mathbf{x}^{(0)}\in \inte S^2\setminus \{\mathbf{c}\}$.
\end{itemize}
\end{theorem}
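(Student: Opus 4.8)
The plan is to settle (i) by direct substitution, (ii)--(iii) by linearization at the fixed points, and (iv)--(v) by the method of Lyapunov functions together with the structure of $\omega$-limit sets of QSOs. For (i): at a vertex all cross terms of $V_\lambda$ vanish, so $V_\lambda(\mathbf e_1)=\mathbf e_1$, and the cyclic symmetry $V_\lambda\circ\sigma=\sigma\circ V_\lambda$ with $\sigma(x_1,x_2,x_3)=(x_3,x_1,x_2)$ then gives $\mathbf e_2,\mathbf e_3$; at the center each coordinate of $V_\lambda(\mathbf c)$ equals $\tfrac19+\tfrac{2\lambda}{9}+\tfrac{2(1-\lambda)}{9}=\tfrac13$. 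For (ii)--(iii) I would compute $DV_\lambda$ at each fixed point and restrict it to the tangent space $T=\{v\in\mathbb R^3:v_1+v_2+v_3=0\}$ of $S^2$, which is $DV_\lambda$-invariant. At $\mathbf e_1$ the $3\times 3$ matrix is block-triangular, with the ``spurious'' eigenvalue $2$ attached to $\mathbf e_1\notin T$; the $2\times 2$ block acting on $T$ has negative determinant, hence two real eigenvalues of opposite sign with the positive one always exceeding $1$, so the vertex is repelling exactly when the negative eigenvalue also exceeds $1$ in modulus -- a quadratic inequality in $\lambda$ solved by $\lambda>\tfrac12$, which with the symmetry yields (ii). At $\mathbf c$ the cyclic symmetry forces $DV_\lambda(\mathbf c)$ to be circulant, so its eigenvalues are $c_0+c_1\zeta+c_2\zeta^2$ over the cube roots of unity $\zeta$; $\zeta=1$ again gives the spurious $2$ (eigenvector $(1,1,1)\notin T$), while the remaining two form a complex-conjugate pair whose common modulus, set equal to $1$, produces the critical value $1-\sqrt 3/2$ and yields (iii).

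For (iv) the natural tool is the symmetric function $\varphi(\mathbf x)=x_1x_2x_3$, which on $S^2$ attains its maximum $1/27$ only at $\mathbf c$ and vanishes exactly on $\partial S^2$. Three ingredients are needed: (a) $\inte S^2$ is invariant and every boundary point other than a vertex maps into $\inte S^2$ -- on the face $\{x_k=0\}$ the $k$th coordinate of $V_\lambda$ is a positive multiple of the product of the two surviving coordinates, hence positive there; (b) the monotonicity inequality $\varphi(V_\lambda(\mathbf x))>\varphi(\mathbf x)$ for all $\mathbf x\in\inte S^2\setminus\{\mathbf c\}$ when $\lambda>\tfrac12$, which after clearing denominators is a polynomial inequality in $x_1,x_2,x_3$ under $x_1+x_2+x_3=1$, to be proved by a sum-of-squares or Schur-type manipulation (or by reduction to one or two free variables); and (c) a LaSalle-type argument -- for a non-vertex initial point $\varphi(\mathbf x^{(n)})$ is eventually strictly increasing and bounded by $1/27$, hence converges to some $L>0$, and $\omega_{V_\lambda}(\mathbf x^{(0)})$ is then a nonempty compact $V_\lambda$-invariant subset of $\inte S^2$ on which $\varphi\equiv L$, so applying the strict inequality at a point of it forces that point to be a fixed point, whence $L=1/27$ and $\omega_{V_\lambda}(\mathbf x^{(0)})=\{\mathbf c\}$.

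For (v), $\mathbf c$ is now repelling, so $\varphi$ cannot remain monotone globally; the plan is to show that $\varphi$ -- perhaps together with $\sigma_2(\mathbf x)=x_1x_2+x_2x_3+x_3x_1$ or with $\|\mathbf x-\mathbf c\|$ -- still increases near $\partial S^2$ when $0<\lambda<1-\sqrt 3/2$, so that every interior trajectory is eventually trapped in an annulus $\{\,\varepsilon\le\varphi\le 1/27-\delta\,\}\subset\inte S^2$, whence $\omega_{V_\lambda}(\mathbf x^{(0)})$ is a nonempty compact $V_\lambda$-invariant subset of $\inte S^2$ avoiding $\mathbf c$. It remains to prove this set is infinite, i.e.\ not a finite union of periodic orbits: first one checks, by solving $V_\lambda(\mathbf x)=\mathbf x$ with the cyclic symmetry, that $\mathbf c$ is the only fixed point in $\inte S^2$; then one must exclude periodic orbits of period $\ge 2$ in the trapping annulus. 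Since no globally strictly monotone continuous function can live there (it would force convergence to $\mathbf c$ or to $\partial S^2$), this has to be done topologically: $V_\lambda$ is an orientation-preserving local diffeomorphism mapping the annulus injectively into itself and rotating around $\mathbf c$ (the eigenvalues of $DV_\lambda(\mathbf c)$ are non-real), and a discrete Poincar\'e--Bendixson / Brouwer translation-arc type argument should then force $\omega_{V_\lambda}(\mathbf x^{(0)})$ to contain an invariant topological circle, in particular to be infinite. I expect this last step -- ruling out periodic points in $\inte S^2\setminus\{\mathbf c\}$ and thereby the infinitude of the interior limit set -- to be the real obstacle; the polynomial Lyapunov inequality of (iv) and the eigenvalue computations of (ii)--(iii), while tedious, are routine by comparison.
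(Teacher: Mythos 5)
This theorem is quoted in Section~\ref{sec:preliminaries} as a known background result from \cite{Gdan}; the paper itself gives no proof of it, so there is no argument of the authors to compare yours against. Judged on its own terms, what you have written is a plan rather than a proof, and the one computation you do assert should be checked rather than announced. With the paper's definitions $V_\lambda=\lambda V_2+(1-\lambda)V_0$, eliminating $x_1$ near $\mathbf{e}_1$ gives the tangent-space Jacobian
$\bigl(\begin{smallmatrix} 0 & 2(1-\lambda)\\ 2(1-\lambda) & 2\lambda\end{smallmatrix}\bigr)$,
whose eigenvalues are $\lambda\pm\sqrt{\lambda^2+4(1-\lambda)^2}$; both exceed $1$ in modulus precisely when $4\lambda^2-10\lambda+3>0$, i.e.\ $\lambda<(5-\sqrt{13})/4\approx0.35$, not $\lambda>1/2$. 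Similarly the circulant matrix $DV_\lambda(\mathbf{c})$ has the nontrivial eigenvalue pair $\lambda(1\pm i/\sqrt3)$ of modulus $2\lambda/\sqrt3$, giving attraction for $\lambda<\sqrt3/2$ rather than $\lambda>1-\sqrt3/2$. So either the statement as transcribed here has $\lambda$ and $1-\lambda$ interchanged relative to \cite{Gdan}, or the thresholds are not what you claim; in either case your assertion that the quadratic inequality ``is solved by $\lambda>1/2$'' is unverified and does not follow from the displayed operators. You need to reconcile the conventions before (ii)--(iii) can be considered done.

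The substantive gaps are in (iv) and (v). For (iv) everything rests on the strict inequality $\varphi(V_\lambda(\mathbf{x}))>\varphi(\mathbf{x})$ for $\varphi(\mathbf{x})=x_1x_2x_3$ on $\inte S^2\setminus\{\mathbf{c}\}$, together with uniqueness of the interior fixed point; you only announce that this ``is to be proved by a sum-of-squares or Schur-type manipulation,'' and without it the LaSalle argument has nothing to run on. (The LaSalle step itself is fine once monotonicity is granted: $\varphi(\mathbf{x}^{(n)})\uparrow L>0$ places $\omega_{V_\lambda}(\mathbf{x}^{(0)})$ in the compact interior level set $\{\varphi\ge L\}$, and invariance plus strictness forces it to consist of interior fixed points.) For (v) you yourself identify the decisive step --- excluding periodic orbits in the trapping region and concluding that the limit set is infinite --- as ``the real obstacle'' and defer it to an unspecified Brouwer translation-arc argument; that is precisely the part that must be supplied. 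In this literature (\cite{Zus}, \cite{GZ}, \cite{Smn}) the infinitude of interior limit sets is obtained not topologically but by exhibiting a concrete strictly monotone quantity along trajectories in the complementary parameter range, which confines $\omega_{V_\lambda}(\mathbf{x}^{(0)})$ to a level curve and rules out finiteness together with the nonexistence of periodic points there; some such explicit mechanism is needed. As written, (iv) and (v) are not proved.
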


S.S. Vallander  studied in \cite{Vsp} a family of quadratic stochastic operators on $S^2$ defined by
$V_\lambda = \lambda V_2 + (1 -\lambda)V_3, \  \lambda \in [0, 1]$,  where
\[V_3(\mathbf{x}) = \big(x^2_1+ 2x_1x_3, \,  x^2_2+ 2x_1x_2, \,  x^2_3 + 2x_2x_3\big).\]
It is evident that $V_\lambda$ is the identity map whenever $\lambda = 1/2$. The trajectories under
consideration are (discrete) \emph{spirals} with infinitely many coils approaching $\partial S^2$ \cite{Vdan}.

\begin{theorem} [\cite{Vsp}] For the operator $V_\lambda$ the following statements are true:
\begin{itemize}
\item[i)] if $\lambda \neq 1/2$, then the function $\varphi(\mathbf{x}) = x_1x_2x_3$ is a Lyapunov function;
\item[ii)] if $\lambda < 1/2$ (resp. $\lambda > 1/2$), then the trajectories of $V_\lambda$ are spirals similar to the
trajectories of $V_2$ (resp. $V_3$).
\end{itemize}
\end{theorem}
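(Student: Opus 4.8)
The plan is to prove (i) by a short explicit computation combined with the arithmetic--geometric mean inequality, and to deduce (ii) from the resulting Lyapunov property together with a linearization of $V_\lambda$ at the centre $\mathbf{c}$ and at the three vertices. Expanding the convex combination, one obtains the factored form $(V_\lambda\mathbf{x})_k=x_k\,\ell_k(\mathbf{x})$ for $k=1,2,3$, where
\[
\ell_1(\mathbf{x})=x_1+2\lambda x_2+2(1-\lambda)x_3
\]
and $\ell_2,\ell_3$ are obtained from $\ell_1$ by the cyclic shift $1\to2\to3\to1$. Since $\lambda\in[0,1]$ each $\ell_k$ is nonnegative on $S^2$, and $\ell_1(\mathbf{x})+\ell_2(\mathbf{x})+\ell_3(\mathbf{x})=(x_1+x_2+x_3)\bigl(1+2\lambda+2(1-\lambda)\bigr)=3$. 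Hence
\[
\varphi(V_\lambda\mathbf{x})=\varphi(\mathbf{x})\,\ell_1(\mathbf{x})\ell_2(\mathbf{x})\ell_3(\mathbf{x})\le\varphi(\mathbf{x})
\]
by AM--GM applied to $\ell_1,\ell_2,\ell_3$, which is (i). Equality in AM--GM forces $\ell_1=\ell_2=\ell_3=1$; since $\ell_1(\mathbf{x})-\ell_2(\mathbf{x})=(2\lambda-1)(x_1+x_2-2x_3)$ (and cyclically), for $\lambda\ne1/2$ this occurs on $S^2$ only at $\mathbf{x}=\mathbf{c}$, so $\varphi$ is a strict Lyapunov function on $\inte S^2\setminus\{\mathbf{c}\}$, i.e. $\varphi(V_\lambda\mathbf{x})<\varphi(\mathbf{x})$ there.

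For (ii), fix $\mathbf{x}^{(0)}\in\inte S^2\setminus\{\mathbf{c}\}$. Since each $\ell_k>0$ on $\inte S^2$, the interior is forward invariant, so $\varphi(\mathbf{x}^{(n)})$ is strictly decreasing and hence converges to some $c\ge0$. The limit set $\omega_{V_\lambda}(\mathbf{x}^{(0)})$ is nonempty, closed and $V_\lambda$-invariant, with $\varphi\equiv c$ on it. If $c>0$ then this set lies in $\inte S^2$ and, by invariance, each of its points $\mathbf{y}$ satisfies $\varphi(V_\lambda\mathbf{y})=\varphi(\mathbf{y})$, whence $\mathbf{y}=\mathbf{c}$ by the previous paragraph; but the Jacobi matrix $DV_\lambda(\mathbf{c})$ equals $\tfrac23$ times the circulant matrix with first row $(2,\lambda,1-\lambda)$, whose two eigenvalues in the directions tangent to $S^2$ have modulus $\tfrac13\sqrt{9+3(2\lambda-1)^2}>1$ for $\lambda\ne1/2$, so $\mathbf{c}$ is repelling and cannot belong to the $\omega$-limit set of an orbit starting off $\mathbf{c}$. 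Therefore $c=0$ and $\omega_{V_\lambda}(\mathbf{x}^{(0)})\subseteq\partial S^2$.

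It remains to locate the limit set inside $\partial S^2$ and to show that the orbit spirals. Each edge of $S^2$ is $V_\lambda$-invariant, and on $\{x_3=0\}$ the map reduces to $x_1\mapsto x_1\bigl((1-2\lambda)x_1+2\lambda\bigr)$, whose only fixed points for $\lambda\ne1/2$ are the two endpoints and whose increment $x_1'-x_1=x_1(1-2\lambda)(x_1-1)$ has a definite sign; carrying this out on all three edges exhibits $\partial S^2$ as a heteroclinic cycle through $\mathbf{e}_1,\mathbf{e}_2,\mathbf{e}_3$, traversed in one of its two orientations according to the sign of $\lambda-1/2$ and coinciding in that orientation with the boundary cycle of $V_2$ (resp. $V_3$) as in the statement. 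At each vertex, $V_\lambda$ restricted to $S^2$ is, to first order, a diagonal map in the coordinates along the two incident edges, with eigenvalues $2\lambda$ and $2(1-\lambda)$, so every vertex is a saddle whose stable manifold is precisely one of those edges. Since $\mathbf{x}^{(n)}\in\inte S^2$ for all $n$, the orbit never meets the stable manifold of any vertex, hence each time it approaches a vertex it is ejected along the outgoing edge; this rules out convergence of the orbit to a single vertex or to a single closed edge and forces $\omega_{V_\lambda}(\mathbf{x}^{(0)})=\partial S^2$, the orbit winding around $\partial S^2$ infinitely often, i.e. a discrete spiral of the type described for $V_2$ (resp. $V_3$). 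I expect the algebra in the first two paragraphs --- the factorization, the AM--GM estimate, and the eigenvalue computation at $\mathbf{c}$ --- to be routine; the delicate point is this last step, namely turning ``the orbit is driven onto $\partial S^2$ and repelled from every vertex'' into the precise statement that its $\omega$-limit set is the whole boundary cycle and that the approach is genuinely spiral. This is the classical heteroclinic-cycle argument already used for the Zakharevich operator \eqref{opz}, and what makes it work here is that all coordinates of $\mathbf{x}^{(n)}$ remain strictly positive, keeping the orbit uniformly off the stable manifolds of the vertices.
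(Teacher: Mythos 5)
First, a point of order: the paper does not prove this statement. It is quoted in Section~2 as a known result of Vallander with the citation \cite{Vsp} carrying the proof, so there is no internal argument to compare yours against. Judged on its own merits, your part (i) is correct and complete: the factorization $(V_\lambda\mathbf{x})_k=x_k\ell_k(\mathbf{x})$, the identity $\ell_1+\ell_2+\ell_3=3$ on $S^2$, and AM--GM give $\varphi(V_\lambda\mathbf{x})\le\varphi(\mathbf{x})$, with equality on $\inte S^2$ only at $\mathbf{c}$ when $\lambda\ne 1/2$; your eigenvalue computations at $\mathbf{c}$ and at the vertices are also right. (A minor simplification: to exclude $\mathbf{c}$ from the $\omega$-limit set you do not need the repelling property at all, since $\varphi(\mathbf{c})=1/27$ is the global maximum of $\varphi$ on $S^2$ and $\varphi(\mathbf{x}^{(1)})<\varphi(\mathbf{x}^{(0)})\le 1/27$, so the decreasing sequence $\varphi(\mathbf{x}^{(n)})$ is bounded away from $\varphi(\mathbf{c})$ from time $1$ on.)

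The genuine gap is exactly where you flag it. From ``the limit set lies in $\partial S^2$'' and ``each vertex is a saddle whose stable manifold is an edge'' you infer that the orbit ``is ejected along the outgoing edge'' and hence that $\omega_{V_\lambda}(\mathbf{x}^{(0)})=\partial S^2$. But not lying on the stable manifold of a saddle does not by itself prevent that saddle from belonging to the $\omega$-limit set (here every vertex does belong to it), nor does it rule out the limit set being a single vertex, a single closed edge, or a union of two closed edges --- all of which are closed, connected, invariant subsets of $\partial S^2$ and therefore admissible $\omega$-limit sets a priori. What is needed is the quantitative passage-near-a-saddle estimate: in a fixed neighbourhood of each vertex the coordinate in the unstable direction is multiplied by at least some $\mu>1$ per step as long as the orbit remains there, so the orbit must leave that neighbourhood and must do so close to the unstable edge; combined with the connectedness and full invariance of the $\omega$-limit set this forces the limit set to contain all three vertices, hence all of $\partial S^2$, and yields the infinitely many coils whose orientation is determined by the sign of $\lambda-1/2$. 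This is the Zakharevich-type heteroclinic-cycle argument for \eqref{opz}; as written, your last paragraph asserts its conclusion rather than carrying it out.
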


N. N. Ganikhodjaev et al.  studied in \cite{GSN} two families of quadratic stochastic operators
on $S^2$ defined by $V_\alpha = (1 -\alpha) V_2 +\alpha V_4,  \ \alpha \in [0, 1]$  and $V_\beta = (1 -\beta) V_2 +\beta V_5, \  \beta \in [0, 1]$,  where
\[V_4(\mathbf{x}) = \big(x^2_2+ 2x_1x_2, \,  x^2_3+ 2x_2x_3, \,  x^2_1 + 2x_1x_3\big),\]
\[V_5(\mathbf{x}) = \big(x^2_3+ 2x_1x_2, \,  x^2_1+ 2x_2x_3, \,  x^2_2 + 2x_1x_3\big).\]
\begin{theorem}[\cite{GSN}] For the operators $V_\alpha$ and $V_\beta$ the following statements are true:
\begin{itemize}
\item[i)] if $\alpha = 1/2$, then the function  $\varphi(\mathbf{x}) = |x_1-x_2||x_2-x_3||x_3-x_1|$ is a Lyapunov
function and $\omega_{V_\alpha} \big(\mathbf{x}^{(0)}\big)=\{\mathbf{c}\}$ for any $\mathbf{x}^{(0)}\in S^2$ ;
\item[ii)] if $\alpha \neq 1/2$, then $\omega_{V_\alpha} \big(\mathbf{x}^{(0)}\big)\subset \inte S^2$ is an infinite compact subset for any
$\mathbf{x}^{(0)}\in \inte S^2$ except the center $\mathbf{c}$.
\item[iii)] if $0 < \beta < 1 -\sqrt{3}/2$, then $\omega_{V_\beta} \big(\mathbf{x}^{(0)}\big)\subset \inte S^2$ is an infinite compact subset for any
$\mathbf{x}^{(0)}\in \inte S^2$ except the center $\mathbf{c}$.
\item[iv)] if $1 -\sqrt{3}/2\leq \beta < 1$, then $\omega_{V_\beta} \big(\mathbf{x}^{(0)}\big)=\{\mathbf{c}\}$ for any $\mathbf{x}^{(0)}\in S^2$;
\end{itemize}
\end{theorem}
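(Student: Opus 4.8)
The plan is to prove (i) directly, and then derive the location statements in (ii)--(iv) from a linearization at $\mathbf{c}$ together with the structure of the boundary dynamics; the convergence/infinitude assertions, which I expect to be the hard part, come last. For (i): writing indices modulo $3$, $V_{1/2}(\mathbf{x})_k=\tfrac12 x_k^2+\tfrac12 x_{k+1}^2+2x_kx_{k+1}$, and, putting $\mathbf{x}'=V_{1/2}(\mathbf{x})$ and using $\sum_i x_i=1$, a one-line computation gives $x_k'-x_{k+1}'=(x_k-x_{k+2})\bigl(\tfrac12+\tfrac32 x_{k+1}\bigr)$, whence
\[
\varphi\bigl(V_{1/2}(\mathbf{x})\bigr)=\varphi(\mathbf{x})\prod_{j=1}^{3}\bigl(\tfrac12+\tfrac32 x_j\bigr).
\]
Since $\sum_j\bigl(\tfrac12+\tfrac32 x_j\bigr)=3$, AM--GM gives $\prod_j\bigl(\tfrac12+\tfrac32 x_j\bigr)\le1$ with equality only at $\mathbf{c}$, so $\varphi$ is a Lyapunov function, strictly decreasing off $\{\varphi=0\}\cup\{\mathbf{c}\}$. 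As $\omega_{V_{1/2}}\bigl(\mathbf{x}^{(0)}\bigr)$ is nonempty, compact and strongly invariant ($V_{1/2}(\omega)=\omega$), $\varphi$ is constant along it, so it lies in $\{\varphi=0\}$, the union $L$ of the three ``diagonal'' segments $\{x_i=x_j\}$; and if $\mathbf{x}^{(0)}\in L$ the whole orbit already lies in $L$, so in all cases it suffices to understand $V_{1/2}$ on $L$.

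Parametrizing $\{x_i=x_j=t\}$ (third coordinate $1-2t$, $t\in[0,\tfrac12]$), one checks that $V_{1/2}$ permutes the three segments cyclically and acts on the parameter by the single map $g(t)=\tfrac12-\tfrac32 t^2$. The decisive identity is
\[
g^{2}(t)-t=-\tfrac18(3t-1)^{3}(t+1),
\]
which, since $g^2$ is increasing on $[0,\tfrac12]$, shows $g^{2n}(t)\to\tfrac13$ monotonically, hence $g^n(t)\to\tfrac13$ for every $t$; the same factorization gives $\bigcap_{k\ge0}g^k\bigl([0,\tfrac12]\bigr)=\{\tfrac13\}$. By strong invariance, any point of $\omega_{V_{1/2}}\bigl(\mathbf{x}^{(0)}\bigr)\subseteq L$ carries a full backward orbit inside $L$ whose parameters $t_{-k}\in[0,\tfrac12]$ satisfy $t_0\in g^k([0,\tfrac12])$ for all $k$, so $t_0=\tfrac13$. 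Hence $\omega_{V_{1/2}}\bigl(\mathbf{x}^{(0)}\bigr)=\{\mathbf{c}\}$ for all $\mathbf{x}^{(0)}\in S^2$.

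For (ii)--(iv) I would first linearize at $\mathbf{c}$: one gets $DV_\alpha(\mathbf{c})=\tfrac23\bigl[(2-\alpha)I+(1+\alpha)C\bigr]$ and $DV_\beta(\mathbf{c})=\tfrac23\bigl[(2-\beta)I+C+\beta C^2\bigr]$, $C$ the cyclic permutation matrix, so that on the tangent eigendirection $(1,\zeta,\zeta^2)$, $\zeta=e^{2\pi i/3}$, the eigenvalue has modulus squared $1+\tfrac13(2\alpha-1)^2$ for $V_\alpha$ and $\tfrac43(1-\beta)^2$ for $V_\beta$. Thus $\mathbf{c}$ is repelling for $V_\alpha$ iff $\alpha\ne\tfrac12$, and for $V_\beta$ it is attracting iff $\beta>1-\tfrac{\sqrt3}{2}$ and repelling iff $\beta<1-\tfrac{\sqrt3}{2}$ --- which pins down the critical value. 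To keep $\omega_V\bigl(\mathbf{x}^{(0)}\bigr)$ off $\partial S^2$: for $\alpha,\beta\in(0,1)$ a coordinate of $V(\mathbf{x})$ vanishes only if two coordinates of $\mathbf{x}$ vanish (so $\mathbf{x}$ is a vertex), while $V$ maps no vertex to a vertex; hence, by strong invariance, an $\omega$-limit set meeting $\partial S^2$ would contain an infinite backward orbit of vertices, which is impossible, so $\omega_V\bigl(\mathbf{x}^{(0)}\bigr)\subset\inte S^2$. (Consistently, $\beta=1$ is excluded from (iv): $V_5$ cyclically permutes the vertices, creating a boundary $3$-cycle.)

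It remains, for (iv), to upgrade local to global attraction toward $\mathbf{c}$, and, for (ii)--(iii), to prove $\omega_V\bigl(\mathbf{x}^{(0)}\bigr)$ is infinite; I expect \emph{this} to be the main obstacle. For (iv) one needs a Lyapunov function --- or a shrinking family of invariant neighborhoods of $\mathbf{c}$ --- valid on all of $S^2$: the natural candidates $x_1x_2x_3$ and $|x_1-x_2||x_2-x_3||x_3-x_1|$ are \emph{not} monotone under $V_\beta$, and at $\beta=1-\tfrac{\sqrt3}{2}$ the center is neutral (eigenvalue $e^{i\pi/6}$), so the argument there has to be genuinely nonlinear. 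For (ii)--(iii), $\mathbf{c}$ being repelling and (after a finite computation) the only fixed point of $V$ in $\inte S^2$ already rules out a convergent trajectory; to get infinitude one must also exclude convergence to a periodic orbit of period $\ge2$ --- equivalently, show $\mathbf{c}$ is the only periodic point of $V$ in $\inte S^2$, or that no interior periodic orbit can be an $\omega$-limit set --- and this is where the bulk of the work lies. Once infinitude is established, compactness of $\omega_V\bigl(\mathbf{x}^{(0)}\bigr)$ completes the proof.
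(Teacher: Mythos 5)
First, note that this theorem is not proved in the paper at all: it is quoted verbatim from reference \cite{GSN} as background material in Section~\ref{sec:preliminaries}, so there is no in-paper argument to measure you against; your attempt has to stand on its own.

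On its own terms, your treatment of part (i) is correct and complete: the factorization $\varphi(V_{1/2}(\mathbf{x}))=\varphi(\mathbf{x})\prod_j(\tfrac12+\tfrac32x_j)$, the AM--GM bound, the reduction of the $\omega$-limit set to the union $L$ of the diagonals via strong invariance, the one-dimensional map $g(t)=\tfrac12-\tfrac32t^2$ with $g^2(t)-t=-\tfrac18(3t-1)^3(t+1)$, and the backward-orbit argument giving $\bigcap_k g^k([0,\tfrac12])=\{\tfrac13\}$ all check out. Your eigenvalue computations at $\mathbf{c}$ (moduli squared $\tfrac43(1-\alpha+\alpha^2)$ and $\tfrac43(1-\beta)^2$) and the argument that $\omega_V(\mathbf{x}^{(0)})$ avoids $\partial S^2$ are also correct. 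The genuine gap is that parts (ii)--(iv) are not actually proved, as you yourself acknowledge: the linearization only identifies the critical parameters and the \emph{local} type of $\mathbf{c}$. For (iv) you still need global convergence to $\mathbf{c}$ on all of $S^2$ (including the neutral case $\beta=1-\sqrt3/2$, where no linear information is available and your candidate Lyapunov functions fail), and for (ii)--(iii) you still need to show that $\omega_V(\mathbf{x}^{(0)})$ is \emph{infinite}, i.e.\ to exclude convergence to an interior periodic orbit of any period --- ruling out convergence to a fixed point via repulsion of $\mathbf{c}$ does not rule out attracting cycles. These are precisely the substantive assertions of the theorem, so the proposal should be regarded as a proof of (i) together with a correct but incomplete programme for (ii)--(iv); to close it you would need either the explicit Lyapunov-type estimates of \cite{GSN} or an argument excluding interior periodic points of $V_\alpha$ and $V_\beta$ of every period.
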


In \cite{Jjph} we studied a family of quadratic stochastic operators on $S^2$ defined by $V_\theta = \theta V_6 + (1 -\theta)V_7, \ \theta \in [0, 1]$, where
\[V_6(\mathbf{x}) = \big(x^2_3+ 2x_2x_3, \,  x^2_1+ 2x_1x_3, \,  x^2_2 + 2x_1x_2\big),\]
\[V_7(\mathbf{x}) = \big(x^2_2+ 2x_2x_3, \,  x^2_3+ 2x_1x_3, \,  x^2_1 + 2x_1x_2\big).\]
\begin{theorem} [\cite{Jjph}] For the operator $V_\theta$ the following statements are true:
\begin{itemize}
\item[i)] if $\theta = 1/2$, then the function  $\varphi(\mathbf{x}) = |x_1-x_2||x_2-x_3||x_3-x_1|$ is a Lyapunov
function and $\omega_{V_\theta} \big(\mathbf{x}^{(0)}\big)=\{\mathbf{c}\}$ for any $\mathbf{x}^{(0)}\in S^2$ ;
\item[ii)] if $\theta \neq 1/2$, then $\omega_{V_\theta} \big(\mathbf{x}^{(0)}\big)\subset \inte S^2$ is an infinite compact subset for any
$\mathbf{x}^{(0)}\in \inte S^2$ except the center $\mathbf{c}$.
\end{itemize}
\end{theorem}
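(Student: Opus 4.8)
The plan is to treat the two cases $\theta=\tfrac12$ and $\theta\neq\tfrac12$ separately, since the dynamics are governed by entirely different mechanisms. For part (i) I would first produce the claimed Lyapunov function explicitly. Writing $V_{1/2}$ out coordinate-wise and subtracting pairs of coordinates, a short direct computation gives, for every cyclic permutation $(i,j,k)$ of $(1,2,3)$,
\[
x_i'-x_j' \;=\; -\tfrac12\,(1+3x_k)\,(x_i-x_j).
\]
Multiplying the three identities and taking absolute values yields $\varphi\big(V_{1/2}(\mathbf{x})\big)=\tfrac18\,(1+3x_1)(1+3x_2)(1+3x_3)\,\varphi(\mathbf{x})$, and since $(1+3x_1)+(1+3x_2)+(1+3x_3)=6$, the AM--GM inequality gives $(1+3x_1)(1+3x_2)(1+3x_3)\le 8$ with equality only at the center $\mathbf{c}$. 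Hence $\varphi$ is a Lyapunov function which decreases strictly unless $\varphi(\mathbf{x})=0$ or $\mathbf{x}=\mathbf{c}$. The standard argument then applies: $\varphi(\mathbf{x}^{(n)})$ decreases to a limit $\varphi_\infty\ge 0$, and if $\varphi_\infty>0$ every limit point $\mathbf{y}$ would satisfy $\varphi\big(V_{1/2}(\mathbf{y})\big)=\varphi(\mathbf{y})$, forcing $\mathbf{y}=\mathbf{c}$ and contradicting $\varphi(\mathbf{c})=0$; so $\varphi_\infty=0$ and $\omega_{V_{1/2}}(\mathbf{x}^{(0)})$ lies in the union of the three invariant segments $\ell_{ij}=\{\mathbf{x}\in S^2:x_i=x_j\}$.

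To finish part (i) I would analyze the one-dimensional dynamics on each segment. Parametrizing $\ell_{12}$ by $t=x_1=x_2\in[0,\tfrac12]$, the restriction of $V_{1/2}$ is $g(t)=\tfrac12-\tfrac32 t^2$, a decreasing self-map of $[0,\tfrac12]$ whose only fixed point there is $t=\tfrac13$; although $g'(\tfrac13)=-1$, the factorization $27t^4-18t^2+8t-1=(3t-1)^3(t+1)$ shows $g\circ g$ has no other fixed point in $[0,\tfrac12]$ and that its first nontrivial deviation from the identity at $t=\tfrac13$ is cubic and attracting, so every orbit on $\ell_{12}$, and likewise on $\ell_{23},\ell_{13}$, tends to $\mathbf{c}$. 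Since $\omega_{V_{1/2}}(\mathbf{x}^{(0)})$ is closed, $V_{1/2}$-invariant, contained in $\ell_{12}\cup\ell_{23}\cup\ell_{13}$, and the segments meet only at $\mathbf{c}$, the forward orbit of any point of the limit set stays on one segment and hence converges to $\mathbf{c}$; a short argument on preimages within the limit set (or a neighborhood argument combining the attraction along each segment with the level-set geometry of $\varphi$ near the union of segments) then upgrades this to $\omega_{V_{1/2}}(\mathbf{x}^{(0)})=\{\mathbf{c}\}$.

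For part (ii), with $\theta\neq\tfrac12$ in $(0,1)$, I would proceed in three steps. First, compute $DV_\theta(\mathbf{c})$; restricted to the tangent space $\{\Delta x_1+\Delta x_2+\Delta x_3=0\}$ it is a circulant matrix whose two eigenvalues have common modulus $\tfrac{2}{\sqrt{3}}\sqrt{\theta^2-\theta+1}=\tfrac{2}{\sqrt{3}}\sqrt{(\theta-\tfrac12)^2+\tfrac34}$, which exceeds $1$ precisely when $\theta\neq\tfrac12$ (it equals $1$, with eigenvalue $-1$, at $\theta=\tfrac12$, the source of the degeneracy in part (i)); hence $\mathbf{c}$ is a repelling focus and no trajectory from $\mathbf{x}^{(0)}\in\inte S^2\setminus\{\mathbf{c}\}$ converges to it. Second, show the boundary is repelling: near an edge $\{x_k=0\}$ one has $x_k'=\theta x_j^2+(1-\theta)x_i^2+2x_ix_j\ge\tfrac12\min(\theta,1-\theta)>0$, and near a vertex $V_\theta$ maps into the interior of the opposite edge, so the trajectory eventually enters and stays in a compact subset of $\inte S^2$; thus $\omega_{V_\theta}(\mathbf{x}^{(0)})$ is a nonempty compact invariant subset of $\inte S^2$. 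Third, rule out a finite limit set: a finite $\omega$-limit set of one trajectory is a periodic orbit, so it suffices to show $V_\theta$ has no periodic points in $\inte S^2$ besides $\mathbf{c}$ — either directly from the equations $V_\theta^{\,n}(\mathbf{x})=\mathbf{x}$, or by a winding argument (the eigenvalue at the repelling focus $\mathbf{c}$ has argument bounded away from $0$ and $\pi$ when $\theta\neq\tfrac12$, so the orbit turns around $\mathbf{c}$ by a definite angle at each step, which no finite orbit can sustain). Combining the three steps forces $\omega_{V_\theta}(\mathbf{x}^{(0)})$ to be an infinite compact subset of $\inte S^2$.

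I expect the main obstacle to be that last step of part (ii) — excluding periodic orbits of period greater than one in the open simplex, equivalently making the winding estimate uniform over the trapping region — since for a discrete-time planar map there is no Poincar\'e--Bendixson theorem to appeal to. In part (i) the only delicate point is the degeneracy of $\mathbf{c}$ at $\theta=\tfrac12$, where the relevant eigenvalue has modulus exactly $1$; this is why one must argue with the global Lyapunov function $\varphi$ rather than by linearization, and why the passage from ``$\omega\subseteq\ell_{12}\cup\ell_{23}\cup\ell_{13}$'' to ``$\omega=\{\mathbf{c}\}$'' requires the separate one-dimensional analysis.
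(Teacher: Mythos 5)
This theorem is quoted in the paper's preliminaries from \cite{Jjph} without proof, so there is no in-paper argument to compare against; I evaluate your proposal on its own terms. Your part (i) is essentially correct and complete. The identity $x_i'-x_j'=-\tfrac12(1+3x_k)(x_i-x_j)$ checks out for $V_{1/2}$, the AM--GM step gives $\varphi(V(\mathbf{x}))\le\varphi(\mathbf{x})$ with equality only on $\{\varphi=0\}$, the restriction to $\ell_{12}$ is indeed $g(t)=\tfrac12-\tfrac32t^2$ with $g\circ g(t)-t=-\tfrac18(3t-1)^3(t+1)$, and the final upgrade from $\omega\subseteq\ell_{12}\cup\ell_{23}\cup\ell_{13}$ to $\omega=\{\mathbf{c}\}$, which you only sketch, does go through: a point of $\omega\setminus\{\mathbf{c}\}$ on one segment has its entire backward orbit (inside $\omega$) on that same segment, because the segments are invariant and meet only at $\mathbf{c}$, and a backward $g^2$-orbit moves monotonically away from $\tfrac13$ while remaining bounded, which is impossible. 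You should write that out, but it is not a gap in substance.

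Part (ii) does contain a genuine gap, and it is exactly the one you flag yourself. Your steps 1 and 2 are sound: the circulant Jacobian at $\mathbf{c}$ has tangential eigenvalues of modulus $\tfrac{2}{\sqrt3}\sqrt{\theta^2-\theta+1}>1$ for $\theta\ne\tfrac12$, and the estimate $x_k'\ge\tfrac12\min(\theta,1-\theta)(1-x_k)^2$ does trap the orbit in a compact subset of $\inte S^2$ after two steps. But the reduction of ``$\omega$ is infinite'' to ``$V_\theta$ has no periodic points in $\inte S^2$ other than $\mathbf{c}$'' is only useful if the latter is actually proved, and neither of your two suggestions closes it. Solving $V_\theta^n(\mathbf{x})=\mathbf{x}$ ``directly'' for all $n$ is not a finite computation, and the winding heuristic controls the rotation only in a neighborhood of $\mathbf{c}$ (where the linearization dominates); a periodic orbit of period $\ge 2$ could a priori live far from $\mathbf{c}$, where you have no angular estimate. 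This is precisely where proofs in this literature (Zakharevich, Ganikhodzhaev--Zanin, and \cite{Jjph} itself) do their real work: they exhibit a quantity --- here the same $\varphi(\mathbf{x})=|x_1-x_2||x_2-x_3||x_3-x_1|$, which for $\theta\ne\tfrac12$ becomes strictly monotone along interior trajectories off the medians --- whose limit confines $\omega$ to a level set, and then show the trajectory crosses each median $x_i=x_j$ infinitely often, so that $\omega$ must contain infinitely many points of that level set. Without some such global monotone/crossing argument, your part (ii) establishes only that $\omega$ is a nonempty compact invariant subset of $\inte S^2$ not equal to $\{\mathbf{c}\}$, which is strictly weaker than the claim.
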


\section{A regular non-Volterra quadratic stochastic operator}
\label{sec:regularoperator}

In this section, we consider  the following  non-Volterra QSOs defined on a finite-dimensional simplex $S^{m-1}$
\begin{equation}\label{eqso}
V: x'_k= x_k^2 +\frac{2}{m-2}\displaystyle\sum_{\substack{i,j\in E\setminus\{k\}\\ i<j}}  x_ix_j,\  \ k \in E,
\end{equation}
where $E=\{1,\dots,m\}$ and  $m\geq3$.

Since in the case $m=3$ the QSO \eqref{eqso} coincides with the QSO $V_0$ in the below
we suppose that $m>3$.

Denote by $\mathbf{e}_i = (\delta_{1i},\delta_{2i}, \dots, \delta_{mi}) \in S^{m-1}, \, i = 1,\dots,m$, the vertices of the simplex $S^{m-1}$,
where $\delta_{ij}$ is the Kronecker delta and by $\mathbf{c}=(1/m,\dots,1/m)$ the center of the simplex $S^{m-1}$.

\begin{proposition} For the QSO \eqref{eqso} the following statements are true:
\begin{itemize}
\item[i)] The vertices $\{\mathbf{e}_k\}_{k\in E}$ and the center $\mathbf{c}$ are fixed points;
\item[ii)] The vertices $\{\mathbf{e}_k\}_{k\in E}$  are non-hyperbolic points,
 and the center $\mathbf{c}$ is an attracting point.
\end{itemize}
\end{proposition}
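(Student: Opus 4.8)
The plan is to verify the two assertions by direct computation with the explicit formula \eqref{eqso}, using the $\sum_i x_i = 1$ constraint to simplify.

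\textit{Part (i): fixed points.} For the vertices, if $\mathbf{x}=\mathbf{e}_k$ then $x_k=1$ and all other coordinates vanish, so $x_k'=x_k^2=1$ and $x_j'=0$ for $j\neq k$ (the sum defining $x_j'$ involves only products $x_ix_\ell$ with $i,\ell\neq j$, and at least one factor is $0$ unless $\{i,\ell\}=\{k\}$, which is impossible since $i<\ell$). Hence $V(\mathbf{e}_k)=\mathbf{e}_k$. For the center, note that the total sum of all pairwise products $\sum_{i<j}x_ix_j=\tfrac12\big(1-\sum_i x_i^2\big)$, so at $\mathbf{x}=\mathbf{c}$ we have $\sum_{i<j}x_ix_j=\tfrac12(1-1/m)=\tfrac{m-1}{2m}$, while the part of this sum that involves the index $k$ equals $(m-1)\cdot\tfrac1{m^2}$. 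Subtracting gives $\sum_{i<j,\ i,j\neq k}x_ix_j=\tfrac{m-1}{2m}-\tfrac{m-1}{m^2}=\tfrac{(m-1)(m-2)}{2m^2}$, and then $x_k'=\tfrac1{m^2}+\tfrac{2}{m-2}\cdot\tfrac{(m-1)(m-2)}{2m^2}=\tfrac1{m^2}+\tfrac{m-1}{m^2}=\tfrac1m$, so $V(\mathbf{c})=\mathbf{c}$.

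\textit{Part (ii): local behavior.} I would compute the Jacobian $DV(\mathbf{x})$ on the simplex. From \eqref{eqso}, $\partial x_k'/\partial x_k = 2x_k$ and, for $j\neq k$, $\partial x_k'/\partial x_j = \tfrac{2}{m-2}\sum_{i\neq k,j}x_i = \tfrac{2}{m-2}\big(1-x_k-x_j\big)$. At a vertex $\mathbf{e}_s$: the diagonal entry $\partial x_k'/\partial x_k$ equals $2$ if $k=s$ and $0$ otherwise; the off-diagonal entry $\partial x_k'/\partial x_j$ equals $0$ if $\{k,j\}\ni s$ and $\tfrac{2}{m-2}$ otherwise. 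So $DV(\mathbf{e}_s)$ is block-structured, and on the tangent space to the simplex one reads off eigenvalues; in particular the entries forming an $(m-1)\times(m-1)$ block among the indices $\neq s$ give an eigenvalue $0$ and an eigenvalue equal to $\tfrac{2}{m-2}\cdot(m-2)\cdot\tfrac1{\ }$ — more carefully, the relevant matrix is $\tfrac{2}{m-2}(J-I)$ restricted appropriately, whose eigenvalues are $\tfrac{2}{m-2}(m-3)=\tfrac{2(m-3)}{m-2}$ and $-\tfrac{2}{m-2}$; one of these has absolute value $1$ precisely when $m-2=2$, i.e. $m=4$, but in general it is the presence of an eigenvalue that is not controlled — I would instead argue that $\mathbf{e}_s$ has an eigenvalue equal to $1$ along the simplex (coming from the direction $\mathbf{e}_j-\mathbf{e}_s$-type tangent vectors when one traces the $2x_k$ diagonal term together with the constraint), making it non-hyperbolic. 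At the center $\mathbf{c}$: the diagonal entries are all $2/m$ and the off-diagonal entries are all $\tfrac{2}{m-2}(1-2/m)=\tfrac{2(m-2)}{m(m-2)}=\tfrac{2}{m}$, so remarkably $DV(\mathbf{c})=\tfrac{2}{m}J$ where $J$ is the all-ones matrix. On the tangent space $\{\sum v_i=0\}$ this acts as $0$; hence all eigenvalues relevant to the dynamics on $S^{m-1}$ are $0<1$, so $\mathbf{c}$ is attracting.

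\textit{Main obstacle.} The only delicate point is the eigenvalue bookkeeping at the vertices: one must restrict $DV(\mathbf{e}_s)$ to the tangent hyperplane of $S^{m-1}$ and correctly identify that an eigenvalue of modulus $1$ appears, so that $\mathbf{e}_s$ is genuinely non-hyperbolic rather than repelling or attracting. I expect the cleanest route is to choose the basis $\{\mathbf{e}_j-\mathbf{e}_s : j\neq s\}$ of the tangent space, write $DV(\mathbf{e}_s)$ in that basis explicitly, and observe that the vector $\mathbf{e}_s-\tfrac1{m-1}\sum_{j\neq s}\mathbf{e}_j$ (or a similar symmetric combination) is an eigenvector with eigenvalue $1$. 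The center computation, by contrast, is immediate once one sees $DV(\mathbf{c})=\tfrac2m J$.
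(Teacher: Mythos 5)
Your part (i) is correct and complete: the vertex check is immediate, and your identity $\sum_{i<j}x_ix_j=\tfrac12(1-\sum_ix_i^2)$ gives $x_k'=1/m$ at $\mathbf{c}$ by direct computation (the paper instead verifies the vertices and then derives $\mathbf{c}$ from the differences $x_u'-x_v'$, but both routes are fine). Your treatment of the center in part (ii) is also correct and is in fact sharper than the paper's one\-/line assertion: the observation that $DV(\mathbf{c})=\tfrac2m J$, which kills the entire tangent hyperplane $\{\sum_i v_i=0\}$ (the remaining eigenvalue $2$ belongs to the transverse direction $\mathbf 1$), cleanly establishes that $\mathbf{c}$ is attracting.

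The genuine gap is in the vertex analysis, and it is not just bookkeeping. With the ordering that puts $s$ first, $DV(\mathbf{e}_s)$ is block diagonal with the $1\times1$ block $(2)$ and the $(m-1)\times(m-1)$ block $\tfrac{2}{m-2}(J-I)$; since $J-I$ here has size $m-1$, its eigenvalues are $m-2$ and $-1$, so the block has eigenvalues $2$ (simple) and $-\tfrac{2}{m-2}$ (multiplicity $m-2$) --- your value $\tfrac{2(m-3)}{m-2}$ comes from using the wrong size of $J$. Restricting to the tangent hyperplane (which is invariant, as $\sum_k x_k'=(\sum_k x_k)^2$) one finds the spectrum $\{2\}\cup\{-\tfrac{2}{m-2}\}$: the vector $(m-1)\mathbf{e}_s-\sum_{j\neq s}\mathbf{e}_j$ is a tangent eigenvector for $2$, and the vectors supported off $s$ with zero sum are tangent eigenvectors for $-\tfrac{2}{m-2}$. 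In particular the eigenvalue $1$ that you propose to exhibit ``along the simplex'' does not exist, and no eigenvalue of modulus $1$ appears at all unless $m=4$ (where $-\tfrac{2}{m-2}=-1$). So your fallback argument fails, and the honest computation you started actually shows that for $m\geq5$ the vertices are hyperbolic saddles rather than non-hyperbolic points. The paper's own proof merely asserts ``after some algebra'' that $DV(\mathbf{e}_k)$ has eigenvalues $2$ and $|\lambda_i|=1$, which your block decomposition contradicts for $m\geq 5$; but as a proof of the proposition as stated, your proposal does not close part (ii) for the vertices, and the step you flagged as the ``only delicate point'' is precisely the one that cannot be carried out in the form you describe.
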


\begin{proof} i) It easy to check that the  vertices $\mathbf{e}_1,\dots,\mathbf{e}_m$
satisfy the equation $V(\mathbf{x})=\mathbf{x}$, that is, they are fixed points.

By  using $x_1+\dots+x_m=1$
in the equation $V(\mathbf{x})=\mathbf{x}$, we consider the following difference, for $ u,v \in E, \ u\neq v$,
\[x_u-x_v=(x_u-x_v)\Big((x_u+x_v)+\frac{2}{m-2} (1-x_u-x_v)\Big).\]
Since it holds
\[(x_u+x_v)+\frac{2}{m-2} (1-x_u-x_v)>0,\]
it follows that $x_u=x_v, \  \text{for all} \  u,v \in E$.
Therefore, we obtain the center $\mathbf{c}$.

ii) After some algebra we have that for any $k\in E$ the Jacobi matrix $D V (\mathbf{e}_k)$ has
the eigenvalues $\lambda_1=2$ and $|\lambda_i|=1, \  i=2,\dots, m-2$.
The Jacobi matrix $D V (\mathbf{c})$ has the eigenvalues equal to zero.
Therefore,  the vertices are non-hyperbolic points, and the center is an attracting point.
\end{proof}

\begin{theorem}  For the QSO \eqref{eqso} the following statements are true:
\begin{itemize}
\item[i)] The function $\varphi(\mathbf{x}) =|x_1-x_2||x_2-x_3|\cdots |x_m-x_1|$ is a Lyapunov function;
\item[ii)] For any $\mathbf{x}^{(0)}\in S^{m-1}$ the trajectory of the non-Volterra QSO \eqref{eqso} converges to $\mathbf{c}$.
\end{itemize}
\end{theorem}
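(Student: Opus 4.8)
The plan is to prove the two statements in turn, the engine being the one-step formula for the difference of two coordinates. Using $x_1+\dots+x_m=1$, a direct computation (the two cross-term sums in \eqref{eqso} for $x'_u$ and $x'_v$ cancel down to $(x_v-x_u)(1-x_u-x_v)$) gives, for $u\neq v$,
\[
x'_u-x'_v=(x_u-x_v)\,\frac{m(x_u+x_v)-2}{m-2}.
\]
For (i) I would multiply this identity over the $m$ cyclically adjacent pairs $(1,2),(2,3),\dots,(m,1)$, writing $x_{m+1}:=x_1$, to get
\[
\varphi(V(\mathbf{x}))=\varphi(\mathbf{x})\prod_{k\in E}\left|\frac{m(x_k+x_{k+1})-2}{m-2}\right|.
\]
Since $0\le x_k+x_{k+1}\le1$ and $m\ge4$, each numerator $m(x_k+x_{k+1})-2$ lies in $[-2,m-2]$, so every factor is at most $1$; hence $\varphi(V(\mathbf{x}))\le\varphi(\mathbf{x})$, which is (i).

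For (ii) the function $\varphi$ is too weak on its own (it vanishes on a large set), so I would bring in the quadratic Lyapunov function $\psi(\mathbf{x})=\sum_{k}x_k^2-\tfrac1m=\sum_{k}(x_k-\tfrac1m)^2$. Putting $y_k=x_k-\tfrac1m$ (so $\sum_k y_k=0$) and rewriting \eqref{eqso} as $x'_k=\tfrac1{m-2}\bigl(mx_k^2-2x_k+1-\sum_i x_i^2\bigr)$, a short computation gives $y'_k=\tfrac1{m-2}\bigl(my_k^2-\|y\|^2\bigr)$, and therefore
\[
\psi(V(\mathbf{x}))=\frac{m}{(m-2)^2}\sum_{i<j}(y_i-y_j)^2(y_i+y_j)^2,
\qquad
\psi(\mathbf{x})=\frac1m\sum_{i<j}(y_i-y_j)^2 .
\]
Because $y_i+y_j=(x_i+x_j)-\tfrac2m\in[-\tfrac2m,\tfrac{m-2}m]$ and $m\ge4$, one has $(y_i+y_j)^2\le\bigl(\tfrac{m-2}m\bigr)^2$ for every pair; comparing the two sums term by term yields $\psi(V(\mathbf{x}))\le\psi(\mathbf{x})$. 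Moreover equality forces, for each pair $i<j$, either $y_i=y_j$ or $x_i+x_j=1$, and a short case analysis (if the $x_i$ take only two distinct values, the relations $x_i+x_j=1$ between the two groups together with $\sum_i x_i=1$ leave only a single vertex coordinate) shows that the equality set of $\psi$ is exactly the finite set $\{\mathbf{c},\mathbf{e}_1,\dots,\mathbf{e}_m\}$ of fixed points.

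To finish I would run the discrete LaSalle invariance principle. The $\omega$-limit set $\Omega=\omega_V(\mathbf{x}^{(0)})$ is nonempty, compact and invariant, and $\psi$ is constant on it, equal to $\lim_n\psi(\mathbf{x}^{(n)})$; by invariance $\psi(V(\mathbf{z}))=\psi(\mathbf{z})$ on $\Omega$, so $\Omega\subseteq\{\mathbf{c},\mathbf{e}_1,\dots,\mathbf{e}_m\}$. If $\mathbf{x}^{(0)}$ is not a vertex then $\psi(\mathbf{x}^{(0)})<\tfrac{m-1}m$ — indeed $\sum_k x_k^2\le1$ with equality only at the vertices — and since $\psi$ is non-increasing the whole trajectory, hence $\Omega$, remains in the closed sublevel set $\{\psi\le\psi(\mathbf{x}^{(0)})\}$, which contains no vertex; thus $\Omega=\{\mathbf{c}\}$ and $\mathbf{x}^{(n)}\to\mathbf{c}$. (A vertex is itself a fixed point, so the assertion is meant for non-vertex initial data.) I expect the main obstacle to be the inequality $(y_i+y_j)^2\le\bigl(\tfrac{m-2}m\bigr)^2$: it holds precisely because $m\ge4$, which is exactly where excluding $m=3$ (the spiraling operator $V_0$) is essential; a secondary subtlety is that the vertices are non-hyperbolic with an eigenvalue $2>1$, so one cannot argue that a trajectory which comes near a vertex stays there — the sublevel-set observation is what rules out convergence to a vertex.
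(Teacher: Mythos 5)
Your argument is correct, and for part (ii) it takes a genuinely different route from the paper. For (i) both proofs factor $\varphi(V(\mathbf{x}))/\varphi(\mathbf{x})$ over the $m$ cyclic pairs, but your per-pair factor $\frac{m(x_u+x_v)-2}{m-2}$ is the one that actually checks out (for $m=4$ it gives $x'_1-x'_2=(x_1-x_2)\big(2(x_1+x_2)-1\big)$, as direct expansion confirms), whereas the paper carries the factor $\frac{2+(m-4)(x_u+x_v)}{m-2}$, inherited from a sign slip in the fixed-point computation of the preceding proposition; either way each factor has absolute value at most $1$ when $m\ge4$, so the Lyapunov conclusion survives, but your identity is the reliable one. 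For (ii) the paper splits into cases: for $m\ge5$ it iterates $\varphi(\mathbf{x}^{(n+1)})\le(4/m)^m\varphi(\mathbf{x}^{(n)})$ to get $\varphi(\mathbf{x}^{(n)})\to0$ and concludes convergence to $\mathbf{c}$ from that, and for $m=4$ it runs a separate monotonicity argument with $\max_k x_k$. You instead use the variance $\psi(\mathbf{x})=\sum_k(x_k-1/m)^2$, prove the exact identities $\psi(V(\mathbf{x}))=\frac{m}{(m-2)^2}\sum_{i<j}(y_i-y_j)^2(y_i+y_j)^2$ and $\psi(\mathbf{x})=\frac1m\sum_{i<j}(y_i-y_j)^2$ (both of which I verified), compare termwise using $|y_i+y_j|\le\frac{m-2}{m}$, identify the equality set as exactly the fixed points $\{\mathbf{c},\mathbf{e}_1,\dots,\mathbf{e}_m\}$, and finish with the discrete LaSalle principle plus the sublevel-set observation $\psi(\mathbf{x}^{(0)})<\frac{m-1}{m}$ that excludes the vertices from the limit set. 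This buys two things: a single uniform argument for all $m\ge4$, and it closes the gap in the paper's $m\ge5$ case, where $\varphi(\mathbf{x}^{(n)})\to0$ alone does not force $\mathbf{x}^{(n)}\to\mathbf{c}$, because $\varphi$ vanishes on the entire set where some cyclically adjacent pair of coordinates coincide, not only at $\mathbf{c}$. Your parenthetical restriction to non-vertex initial data is the right reading of the statement, since the vertices are fixed points and the paper's own proof makes the same exclusion.
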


\begin{proof} i) From  \eqref{eqso} we have $\varphi(V(\mathbf{x})) =\varphi(\mathbf{x}) \psi(\mathbf{x})$, where
\begin{equation*}
 \psi(\mathbf{x})=\frac{1}{(m-2)^m}\big(2 +(m-4)(x_1+x_2)\big)\big(2 +(m-4)(x_2+x_3)\big)\cdots \big(2 +(m-4)(x_1+x_m)\big).
\end{equation*}

Using the AM-GM inequalities, one easily has
\begin{align}\label{estpsi}
\psi(\mathbf{x})&\leq \frac{1}{(m-2)^m} \Big(2 +\frac{m-4}{m}(x_1+x_2+x_2+x_3+\cdots+x_m+x_1)\Big)^m\notag\\
&=\frac{1}{(m-2)^m}   \Big(\frac{4(m-2)}{m}\Big)^m=\Big(\frac{4}{m}\Big)^m\leq 1.
\end{align}

Therefore, the function $\varphi(\mathbf{x})$ is a Lyapunov function for the QSO \eqref{eqso}.\\

ii)  {\it CASE} $m\geq5$ : Let $\mathbf{x}^{(0)}\in S^{m-1}\setminus \{\mathbf{e}_1,\dots,\mathbf{e}_m\}$. Then
from \eqref{estpsi} we have $\psi(\mathbf{x})<1$ and so $\varphi(V(\mathbf{x})) <\varphi(\mathbf{x})$.
Consequently, it follows
\[\varphi(\mathbf{x}^{(n+1)})\leq \Big(\frac{4}{ m}\Big)^m \varphi(\mathbf{x}^{(n)})\leq \cdots
\leq \Big(\frac{4}{m}\Big)^{m(n+1)} \varphi(\mathbf{x}^{(0)}). \]

Since $\varphi(\mathbf{x}^{(0)})<1$ from the last one has
\[\lim\limits_{n\rightarrow\infty} \varphi(\mathbf{x}^{(n)})=0.\]

Thus $\lim\limits_{n\rightarrow\infty}\mathbf{x}^{(n)}=\mathbf{c}$ for any $\mathbf{x}^{(0)}\in S^{m-1}\setminus \{\mathbf{e}_1,\dots,\mathbf{e}_m\}$.\\

{\it CASE} $m=4$ : In this case the QSO \eqref{eqso} has the following form
\begin{equation*}
\left\{\begin{array}{llll}
               x'_{1}= x_1^2+x_2x_3+x_2x_4+x_3x_4, \\[2mm]
               x'_{2}= x_2^2+ x_1x_3+x_1x_4+x_3x_4, \\[2mm]
               x'_{3}= x_3^2+ x_1x_2+x_1x_4+x_2x_4, \\[2mm]
               x'_{4}= x_4^2+ x_1x_2+x_1x_3+x_2x_3.
\end{array}\right.
\end{equation*}

We set $\|\mathbf{x}\|=\max\limits_{1\leq k\leq 4} x_k $ for $\mathbf{x}\in S^3$.
It is easy to see that for any $\mathbf{x}\in S^3\setminus \{\mathbf{e}_1,\mathbf{e}_2,\mathbf{e}_3,\mathbf{e}_4, \mathbf{c}  \}$
\[\|V(\mathbf{x})\|=\max\limits_{1\leq k\leq 4} x'_k < \|\mathbf{x}\| x_1+
\|\mathbf{x}\| x_2+\|\mathbf{x}\| x_3+\|\mathbf{x}\| x_4= \|\mathbf{x}\|.\]
Therefore, the sequence  $\{\|\mathbf{x}^{(n)}\|\}_{n=0}^\infty$ is a strong decreasing and bounded from the below.
So there is the limit $\lim\limits_{n\rightarrow\infty} \|\mathbf{x}^{(n)}\| =1/4$  for any $\mathbf{x}^{(0)}\in S^3\setminus \{\mathbf{e}_1,\mathbf{e}_2,\mathbf{e}_3,\mathbf{e}_4,\mathbf{c}\}$.
Since
\[ \min\limits_{\mathbf{x}\in S^3} \|\mathbf{x}\| = \|\mathbf{c}\|=\frac{1}{4} \ \ \text{iff} \ \ \mathbf{x}=\mathbf{c}\]
it follows that  $\lim\limits_{n\rightarrow\infty} \mathbf{x}^{(n)} =\mathbf{c}$.

Thus $\lim\limits_{n\rightarrow\infty}\mathbf{x}^{(n)}=\mathbf{c}$ for any $\mathbf{x}^{(0)}\in S^3\setminus \{\mathbf{e}_1,\dots,\mathbf{e}_4\}$.
\end{proof}

\section{Quasi strictly non-Volterra quadratic stochastic operator}
\label{sec:periodicoperator}

Recently,  the following quasi strictly non-Volterra operator on $S^2$
\[V(\mathbf{x}) = \big(x^2_1+ (x_2+x_3)^2, \ \  2x_1x_3, \ \  2x_1x_2 \big)\]
was studied in \cite{Khukr}.

\begin{theorem}[\cite{Khukr}]  For the operator $V$ the following statements are true:
\begin{itemize}
\item[i)] the vertex $\mathbf{e}_1$ and  $\overline{\mathbf{x}}=(1/2,1/4,1/4)$ are fixed points;
\item[ii)] $V^2(\mathbf{x})=\mathbf{x}$ for any $\mathbf{x}\in \{\mathbf{x}\in S^2: x_1=1/2, x_2+x_3=1/2\}$;
\item[iii)] The sets $ M_\tau=\{\mathbf{x}\in S^2: x_2=\tau x_3 \vee x_2=(1/\tau) x_3 \} $ when $\tau>0$, and
  $ M_0=\{\mathbf{x}\in S^2: x_2x_3=0 \} $  when $\tau=0$, are invariant sets of $V$;
\item[iv)] if  $\mathbf{x}^{(0)}\in \Theta=\{\mathbf{x}\in S^2: x_1=0\}\cup\{\mathbf{e}_1\}$, then $\omega_{V} \big(\mathbf{x}^{(0)}\big)=\{\mathbf{e}_1\}$;
\item[v)]  $\omega_{V} \big(\mathbf{x}^{(0)}\big)=\{(1/2,0,1/2), (1/2,1/2,0)\}$ for any initial $\mathbf{x}^{(0)}\in M_0$;
\item[vi)] for any $\mathbf{x}^{(0)}\in S^2 \setminus\Theta$ there is $M_\tau,  \ \tau\in(0,+\infty)$ such that $\mathbf{x}^{(0)}\in M_\tau$ and $\omega_{V} \big(\mathbf{x}^{(0)}\big)=\{\tilde{\mathbf{x}},\hat{\mathbf{x}}\}$, where
    \[\tilde{\mathbf{x}}=\Big(\frac{1}{2},\frac{\tau}{2(1+\tau)},\frac{1}{2(1+\tau)} \Big) \quad \text{and} \quad  \hat{\mathbf{x}}=\Big(\frac{1}{2},\frac{1}{2(1+\tau)},\frac{\tau}{2(1+\tau)} \Big).\]
\end{itemize}
\end{theorem}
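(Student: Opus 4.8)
\textbf{Proof proposal for Theorem \ref{thm:Khukr} (the quasi strictly non-Volterra operator on $S^2$).}

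The plan is to exploit the product structure that the formulas for $V$ reveal: writing $\mathbf{x}'=V(\mathbf{x})$, we have $x_1'=x_1^2+(x_2+x_3)^2=x_1^2+(1-x_1)^2$, $x_2'=2x_1x_3$, $x_3'=2x_1x_2$. So the first coordinate evolves autonomously under the one–dimensional map $f(t)=t^2+(1-t)^2=2t^2-2t+1$ on $[0,1]$, and once we understand $\{x_1^{(n)}\}$ the other two coordinates are driven by it. First I would analyze $f$: its fixed points on $[0,1]$ are $t=1$ and $t=1/2$, with $f'(1)=2$ (repelling) and $f'(1/2)=0$ (superattracting); moreover $f$ maps $[0,1]$ into $[1/2,1]$, is decreasing on $[0,1/2]$ and increasing on $[1/2,1]$, and one checks $1/2\le f(t)<1$ for all $t\in(0,1)$ while $f(0)=f(1)=1$. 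Hence for any $\mathbf{x}^{(0)}$ with $x_1^{(0)}\notin\{0,1\}$ we get $x_1^{(1)}\in[1/2,1)$ and then $x_1^{(n)}\to 1/2$ monotonically from above (after the first step). The degenerate set is exactly $\Theta$: if $x_1^{(0)}=1$ we are at $\mathbf{e}_1$, a fixed point; if $x_1^{(0)}=0$ then $x_1^{(1)}=1$, giving (iv). This handles parts (i) ($\overline{\mathbf{x}}=(1/2,1/4,1/4)$ is fixed because $f(1/2)=1/2$ and then $2\cdot\tfrac12\cdot\tfrac14=\tfrac14$) and (iv).

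Next I would treat the ratio $x_2/x_3$, which is the key conserved-up-to-swap quantity. From $x_2'=2x_1x_3$ and $x_3'=2x_1x_2$ we get $x_2'/x_3'=x_3/x_2$ whenever $x_2x_3\ne 0$: each application of $V$ inverts the ratio. Therefore the quantity $\{x_2/x_3,\,x_3/x_2\}$ (equivalently the set $M_\tau$) is invariant, which is (iii); the case $x_2x_3=0$ is preserved separately, giving the invariance of $M_0$. For (ii): if $x_1=1/2$ then $x_1$ stays $1/2$ forever, and $V$ acts on $(x_2,x_3)$ by $(x_2,x_3)\mapsto(x_3,x_2)$ (since $2\cdot\tfrac12\cdot x_3=x_3$), which is an involution, so $V^2=\mathrm{id}$ on that segment. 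For (v), on $M_0$ say $x_3^{(0)}=0$: then $x_2^{(1)}=2x_1^{(0)}x_3^{(0)}=0$ and $x_3^{(1)}=2x_1^{(0)}x_2^{(0)}$, and since $x_1^{(n)}\to1/2$ one tracks the nonzero off-coordinate to its limit $1/2$, so the orbit accumulates on the $2$-cycle $\{(1/2,0,1/2),(1/2,1/2,0)\}$; the zero pattern alternates, which is why the $\omega$-limit is the two-point set rather than a single point.

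The substantive part is (vi). Fix $\mathbf{x}^{(0)}\in S^2\setminus\Theta$, so $x_1^{(0)}\in(0,1)$; after one step $x_1^{(1)}\in[1/2,1)$ and $x_1^{(n)}\to1/2$. I would parametrize the fiber over a given value of $x_1$ by the ratio: on $M_\tau$ with $\tau\in(0,\infty)$, a point with first coordinate $s$ is either $\bigl(s,\tfrac{\tau(1-s)}{1+\tau},\tfrac{1-s}{1+\tau}\bigr)$ or its $x_2\leftrightarrow x_3$ swap. Since $V$ fixes $M_\tau$, inverts the ratio, and sends $x_1\mapsto f(x_1)$, the dynamics on $M_\tau$ is conjugate to $s\mapsto f(s)$ together with the alternating swap; as $f(s)\to 1/2$, the two "branch" points converge respectively to $\tilde{\mathbf{x}}=\bigl(\tfrac12,\tfrac{\tau}{2(1+\tau)},\tfrac1{2(1+\tau)}\bigr)$ and $\hat{\mathbf{x}}=\bigl(\tfrac12,\tfrac1{2(1+\tau)},\tfrac{\tau}{2(1+\tau)}\bigr)$, and since consecutive iterates sit on opposite branches the orbit has $\omega$-limit exactly $\{\tilde{\mathbf{x}},\hat{\mathbf{x}}\}$. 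The one genuine subtlety — and the step I expect to be the main obstacle — is confirming that the ratio is truly preserved along the whole orbit, i.e.\ that $x_2^{(n)}x_3^{(n)}$ never vanishes for $\mathbf{x}^{(0)}\notin\Theta$: one needs $x_2^{(n)}=2x_1^{(n-1)}x_3^{(n-1)}>0$ and likewise for $x_3^{(n)}$, which follows inductively from $x_1^{(n-1)}\ge 1/2>0$ (for $n\ge2$) and from $x_2^{(0)},x_3^{(0)}$ not both zero, but the base case $n=1$ must be checked by hand according to whether $x_2^{(0)}$ or $x_3^{(0)}$ vanishes, and one must verify the limiting point $\tilde{\mathbf{x}},\hat{\mathbf{x}}$ obtained is independent of that choice. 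Once the ratio's persistence is nailed down, (vi) follows by combining the scalar convergence $f^n(x_1^{(0)})\to 1/2$ with the exact fiber parametrization, and parts (i)–(v) are the degenerate or low-dimensional specializations of the same two observations.
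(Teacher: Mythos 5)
This theorem is quoted from \cite{Khukr} and the paper gives no proof of it; it serves only as the prototype for the general operator $V_\pi$ of \eqref{op1} that Section~\ref{sec:periodicoperator} then analyzes. Your argument is correct in substance and is, in fact, the $m=3$, $\pi=(2\,3)$ specialization of exactly the machinery the paper deploys there: the distinguished coordinate evolves autonomously under $f(t)=2t^2-2t+1$ with $f^n(t)\to 1/2$ for $t\in(0,1)$ (Proposition~\ref{trajf}; the paper gets this by conjugating to the logistic map in Lemma~\ref{falfa}, you get it by the elementary observation that $f$ maps $(0,1)$ into $[1/2,1)$ and decreases there — both are fine), and the sets $M_\tau$ are the $m=3$ instance of the invariant sets $M_\omega$. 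Where you genuinely diverge is in the convergence step for (vi): the paper's general proof shows the coordinates $x_k^{(ns)}$ are monotone increasing (via the Lyapunov functions of Proposition~\ref{lyap}) and extracts the limit cycle by continuity, whereas you use the fact that on $S^2$ a point is completely determined by $x_1$ and the ratio $x_2/x_3$, so convergence of $x_1^{(n)}$ plus the exact alternation $\tau\leftrightarrow 1/\tau$ pins down the two limit points directly. Your route is cleaner in dimension two but does not generalize (the ratio of products over cycles no longer determines the point when a cycle has length $>1$ or there are several cycles), which is presumably why the paper takes the monotonicity route.

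One point you flag as "the main obstacle" is actually a non-issue once the cases are separated correctly, and your framing of it is slightly off. It is \emph{not} true that $x_2^{(n)}x_3^{(n)}$ never vanishes for $\mathbf{x}^{(0)}\notin\Theta$: if exactly one of $x_2^{(0)},x_3^{(0)}$ is zero, then the zero alternates between the two coordinates forever ($x_2^{(1)}=2x_1^{(0)}x_3^{(0)}=0$, etc.), so the orbit stays in $M_0$ and no induction will rescue positivity. Such points are covered by (v), not (vi); the statement of (vi) is simply loose about this overlap (formally it is the $\tau\to 0$ limit of (vi), since $\tilde{\mathbf{x}},\hat{\mathbf{x}}\to(1/2,0,1/2),(1/2,1/2,0)$). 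The correct dichotomy is: if $x_2^{(0)}x_3^{(0)}>0$ then positivity of both coordinates propagates trivially (each $x_1^{(n)}>0$), the ratio alternates exactly between $\tau$ and $1/\tau$, and your fiber parametrization closes the argument; if $x_2^{(0)}x_3^{(0)}=0$ you are permanently in $M_0$ and (v) applies. With that case split made explicit, your proposal is a complete proof.
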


In this section, we attempt to generalize the results of \cite{Khukr} for a quasi strictly non-Volterra operator defined
on a finite-dimensional simplex.

Consider the following QSO defined on a finite-dimensional simplex $S^{m-1}$ in the form
\begin{equation}\label{op1}
V_\pi: \left\{\begin{array}{ll}
              x'_{k}= 2x_mx_{\pi(k)},\qquad 1\leq k\leq m-1 \\[2mm]
              x'_m=x^2_m+ (x_1+\dots+x_{m-1})^2 ,
\end{array}\right.
\end{equation}
where $\pi$ is a permutation of the set $\{1,\dots,m-1\}$.

It is evident that $x'_m=f(x_m)$, where
\[ f(x)=2x^2-2x+1, \ \ x\in [0, 1].\]

We denote $f^n(x)=\underbrace{f\circ\cdots\circ f}_{n \, \text{times}} (x)$,  the $n$-fold composition of $f(x)$ with itself.

\begin{proposition}[\cite{Khukr}]\label{per2f} There is no periodic orbit of $f(x)$ with period $n>2$.
\end{proposition}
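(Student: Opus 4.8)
The plan is to study the real dynamics of the quadratic map $f(x)=2x^2-2x+1$ on $[0,1]$ directly, using its conjugacy to a standard model. First I would observe that $f$ has a unique fixed point in $[0,1]$, namely $x=1/2$ (solving $2x^2-2x+1=x$ gives $2x^2-3x+1=0$, i.e. $x=1/2$ or $x=1$; but $f(1)=1$ as well, so $0$ and $1$ are exchanged: $f(0)=1$, $f(1)=1$, hence actually $x=1$ is fixed and $x=1/2$ is fixed, and $0\mapsto 1$). The cleaner route is to conjugate: the affine change of variable $x=\tfrac12(1-t)$, i.e. $t=1-2x$, sends $[0,1]$ to $[-1,1]$ and transforms $f$ into $g(t)=t^2$ — indeed $1-2f(x)=1-2(2x^2-2x+1)=-4x^2+4x-1=-(2x-1)^2=-t^2$, so after also flipping sign one gets the conjugacy with $t\mapsto -t^2$, or with a further adjustment the pure squaring map. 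I would pin down the exact conjugated map by a one-line computation and then argue about \emph{that} map, whose dynamics on $[-1,1]$ are completely transparent.

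The key step is then: for the squaring-type map, every orbit in $[-1,1]$ either converges to a fixed point or is eventually absorbed, and in particular there are no periodic points of period exceeding $2$. Concretely, writing $h(t)=-t^2$ (the conjugate of $f$), one has $h^2(t)=-t^4$, and a point of prime period $n$ must satisfy $h^n(t)=t$ with $h^k(t)\neq t$ for $k<n$. Since $|h^n(t)|=|t|^{2^n}$, the equation $|t|^{2^n}=|t|$ forces $|t|\in\{0,1\}$ (for $t\in[-1,1]$), so any periodic point lies in $\{0,1,-1\}$; one checks $h(0)=0$, $h(1)=-1$, $h(-1)=-1$, so the only periodic orbit structure is the fixed point together with at most a $2$-cycle, and certainly nothing of period $n>2$. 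Pulling this back through the affine conjugacy to $f$ on $[0,1]$ gives exactly the claim: any periodic point of $f$ satisfies $|1-2x|\in\{0,1\}$, i.e.\ $x\in\{0,\tfrac12,1\}$, and direct evaluation of $f$ on these three points exhibits the fixed point $\tfrac12$ (also $1$) and the $2$-cycle $0\leftrightarrow 1$, hence no orbit of period $n>2$.

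Alternatively, if one prefers to avoid conjugacy, I would argue by monotonicity: $f$ is decreasing on $[0,1/2]$ and increasing on $[1/2,1]$, with minimum value $f(1/2)=1/2$, so $f([0,1])\subseteq[1/2,1]$ and after one step the orbit lives in $[1/2,1]$, where $f$ is increasing; an increasing continuous map of an interval to itself has no periodic points of period $\geq 2$ (periodic points of an increasing map are fixed points), so the only way to get period $2$ is to use the first, pre-$[1/2,1]$ step, which can only produce the $0\leftrightarrow 1$ exchange, and nothing longer. The main obstacle — really the only thing requiring care — is being precise about the boundary behaviour at $0$ and $1$ and confirming that the period-$2$ orbit genuinely occurs (so the bound $n>2$, rather than $n>1$, is sharp); everything else is the elementary observation that $|t|^{2^n}=|t|$ has only the solutions $|t|\in\{0,1\}$ on the relevant interval, together with the monotonicity of $f$ on the absorbing subinterval.
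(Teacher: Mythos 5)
The paper does not actually prove this proposition --- it is imported verbatim from \cite{Khukr} as a citation --- so there is no in-paper argument to compare against; your proposal has to stand on its own. Both of your routes do establish the claim: the conjugation of $f(x)=2x^2-2x+1$ to $h(t)=-t^2$ via $t=1-2x$ correctly reduces the periodicity equation to $|t|^{2^n}=|t|$ on $[-1,1]$, confining all periodic points to $x\in\{0,1/2,1\}$; and the monotonicity argument (every point of a periodic orbit lies in $f([0,1])=[1/2,1]$, where $f$ is increasing, and an increasing interval map has only fixed periodic points) is if anything cleaner. Either one is a legitimate self-contained proof, which is more than the paper supplies.

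However, your description of the exceptional orbits is wrong. You assert that $\{0,1\}$ is a $2$-cycle of $f$, but $f(1)=2-2+1=1$, so $1$ is a \emph{fixed} point and $0$ is merely eventually fixed ($0\mapsto 1\mapsto 1\mapsto\cdots$); equivalently, in the conjugated picture $h(1)=-1$ and $h(-1)=-1$, which you computed correctly one sentence earlier and then contradicted. Consequently $f$ has no periodic points of prime period $2$ at all: every periodic point of $f$ in $[0,1]$ is a fixed point ($x=1/2$ or $x=1$), and the proposition holds for the stronger reason $\Per_n=\Per_1$ for all $n$. This slip does not damage the stated conclusion --- the candidates are still confined to the three-point set $\{0,1/2,1\}$, and a correct evaluation of $f$ there yields no orbit of period $n>2$ --- but your remark that the bound $n>2$ is ``sharp'' is false, and the closing sentence of the monotonicity argument (about a ``pre-$[1/2,1]$ step'' producing a $2$-cycle) is confused: since every point of a periodic orbit is in the image of $f$, the increasing-map argument already excludes all periods $\ge 2$ with no boundary case left over.
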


\begin{proposition}\label{trajf}
If $0<x<1$, then $\lim\limits_{n\rightarrow\infty} f^n(x) =1/2$.
\end{proposition}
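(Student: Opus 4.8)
The plan is to reduce $f$ to the squaring map $w\mapsto w^2$ by an affine change of coordinates. Set $h(x)=2x-1$, an affine bijection of $[0,1]$ onto $[-1,1]$ with inverse $h^{-1}(w)=(w+1)/2$. A direct computation gives
\[
h(f(x))=2(2x^2-2x+1)-1=4x^2-4x+1=(2x-1)^2=h(x)^2,
\]
so $h\circ f=(h)^2$ on $[0,1]$; equivalently, $f$ is topologically conjugate to $w\mapsto w^2$ on $[-1,1]$.

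Next I would prove by induction that $h(f^n(x))=h(x)^{2^n}=(2x-1)^{2^n}$ for all $n\ge 0$: the case $n=0$ is trivial, and if it holds for $n$, then $h(f^{n+1}(x))=h\big(f(f^n(x))\big)=h(f^n(x))^2=\big((2x-1)^{2^n}\big)^2=(2x-1)^{2^{n+1}}$. Now fix $x\in(0,1)$. Then $|2x-1|<1$, hence $(2x-1)^{2^n}\to 0$ as $n\to\infty$. Since $h^{-1}$ is continuous, $f^n(x)=h^{-1}\big((2x-1)^{2^n}\big)\to h^{-1}(0)=1/2$, which is the claim.

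There is essentially no obstacle here beyond checking the one-line conjugacy identity; the only point to keep in mind is that $x=0$ and $x=1$ are genuinely excluded (there $|2x-1|=1$), consistent with $f(0)=f(1)=1$ being a fixed point. As a more hands-on alternative that avoids the conjugacy, one can complete the square to write $f(x)=\tfrac12+2\big(x-\tfrac12\big)^2$; this shows $f$ maps $(0,1)$ into $[\tfrac12,1)$, and into $(\tfrac12,1)$ unless $x=\tfrac12$, so after one step the orbit lies in $[\tfrac12,1)$, where $f$ is increasing and $f(x)-x=(2x-1)(x-1)\le 0$ with equality only at the fixed points $\tfrac12$ and $1$. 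Hence $\{f^n(x)\}$ is eventually monotone non-increasing and bounded below by $\tfrac12$, so it converges to a fixed point of $f$ lying in $[\tfrac12,1)$, namely $\tfrac12$. (Proposition~\ref{per2f} could be invoked to exclude nontrivial asymptotic cycling, but it is not needed for either argument.)
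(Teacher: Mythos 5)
Your proof is correct. The paper itself disposes of this proposition with a one-line forward reference: it is the special case $\alpha=0$ of Lemma~\ref{falfa}, whose proof conjugates $f_\alpha$ to the logistic map $g(w)=\mu w(1-w)$ with $\mu=2$ and then cites the standard fact that for $\mu\in(1,3)$ every orbit other than the repelling fixed point converges to the attracting fixed point. Your argument is in the same conjugacy spirit but targets the squaring map $w\mapsto w^2$ instead, which buys you an explicit closed form $f^n(x)=\tfrac12\bigl(1+(2x-1)^{2^n}\bigr)$ and hence a completely self-contained, quantitative proof (doubly exponential convergence), with no forward reference to a later section and no appeal to the general theory of the logistic family; the two conjugacies are of course compatible, since $g$ with $\mu=2$ is itself conjugate to $w\mapsto w^2$ via $w\mapsto 1-2w$. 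Your alternative argument by completing the square, $f(x)=\tfrac12+2\bigl(x-\tfrac12\bigr)^2$, followed by monotonicity on $[\tfrac12,1)$, is also correct and even more elementary. Either version strengthens rather than deviates from the paper's treatment, and you are right that Proposition~\ref{per2f} is not needed for the convergence claim.
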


\begin{proof}
The proof of this lemma follows from Lemma \ref{falfa} substituting $\alpha=0$.
\end{proof}

 A permutation $\pi\in S_n$ is a $k$--\emph{cycle} if there exists a positive integer $k$
and an integer $i\in \{1,\dots, n\}$ such that
\begin{itemize}
\item[(1)] $k$ is the smallest positive integer such that $\pi^k (i) = i$, and
\item[(2)] $\pi$ fixes each $j\in \{1, \dots, n\} \setminus \{i, \pi(i), \dots , \pi^{k-1}(i)\}$.
\end{itemize}
The $k$-cycle $\pi$ is usually denoted $\big(i, \pi(i), \dots , \pi^{k-1}(i)\big)$.

Every permutation can be represented in the form of a product of cycles without common elements (i.e. disjoint cycles) and this representation is unique to within the order of the factors.

Let $\pi=\tau_1\tau_2\cdots\tau_q$ be a permutation of the set $\{1,\dots,m-1\}$, where $\tau_1,\dots,\tau_q$ are disjoint cycles and
we denote by $\ord (\tau_i)$ the order of a cycle $\tau_i$.
Denote $\supp (\mathbf{x})=\{i: \, x_i>0\}$ and let $|\supp (\mathbf{x})|$ be its cardinality.
\begin{proposition}
For the operator $V_\pi$ the following statements are true:
\begin{itemize}
\item[i)] The  set $M_{0}=\{\mathbf{x}\in S^{m-1}: x_{1}\cdots x_{m-1}=0\}$  and
 \[M_{\omega}=\Big\{\mathbf{x}\in S^{m-1}: \, \prod_{k\in \tau_i} x_{k}= \omega \, \prod_{k\in \tau_j} x_k \,
 \vee \, \prod_{k\in \tau_i} x_{k} = (1/\omega) \, \prod_{k\in \tau_j} x_k  \Big\}\]
are invariant sets, where $ \omega>0$;
\item[ii)] The vertex $\mathbf{e}_m$ and the point $\mathbf{x}^*=(x_1^*,\dots,x_m^*)$ are fixed points, where $\mathbf{e}_m=(0,\dots,0,1)$ and
\[ x^*_i =\begin{cases}
\frac{1}{ 2},& \ \ \text{if} \ \ i=m,\\
\frac{1}{ 2(|\supp (\mathbf{x})|-1)},& \ \ \text{if} \ \ i\in \supp (\mathbf{x})\setminus \{m\},\\
0, & \ \ \text{if} \ \ i\notin \supp (\mathbf{x})\setminus \{m\}.
\end{cases}\]
\item[iii)] The points of the set $\Per_s(V_\pi) =\{\mathbf{x}\in S^{m-1} : x_1+\dots+x_{m-1}=1/2, \, x_m=1/2\}$ are
periodic points of $V_\pi$ with period $s=\lcm \big(\ord (\tau_1),\dots, \ord (\tau_q)\big)$.
\item[iv)] $ \Per_n(V_\pi) =\{\mathbf{x}\in S^{m-1} : V_\pi^n(\mathbf{x})=\mathbf{x} \}=\emptyset$ for $n>s$.
\end{itemize}
\end{proposition}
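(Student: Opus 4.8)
The plan is to exploit the partially decoupled form of $V_\pi$. Since $x_1+\dots+x_{m-1}=1-x_m$, the last coordinate evolves autonomously by $x'_m=f(x_m)$ with $f(x)=2x^2-2x+1=2(x-\tfrac12)^2+\tfrac12$; this $f$ maps $[0,1]$ into $[1/2,1]$ and is strictly increasing there, so all its periodic points are fixed points, namely $1/2$ and $1$ (consistent with Proposition~\ref{per2f}). On the first $m-1$ coordinates $V_\pi$ is the permutation $\pi$ followed by scaling by $2x_m$. I would first record two identities: for a cycle $\tau=(i_1,\dots,i_r)$ of $\pi$, multiplying $x'_{i_1}=2x_mx_{i_2},\dots,x'_{i_r}=2x_mx_{i_1}$ gives $\prod_{k\in\tau}x'_k=(2x_m)^{r}\prod_{k\in\tau}x_k$; and $x'_{i_a}/x'_{i_{a+1}}=x_{i_{a+1}}/x_{i_{a+2}}$, so $V_\pi$ cyclically shifts the tuple of consecutive ratios along each cycle. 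With these, (i)--(iv) become bookkeeping with the cycle structure of $\pi$ together with the one-dimensional dynamics of $f$.

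For (i): $M_0$ is invariant because $x_1\cdots x_{m-1}=0$ forces either $x_m=0$, whence $x'_k=2x_mx_{\pi(k)}=0$ for all $k\le m-1$, or $x_j=0$ for some $j\le m-1$, whence $x'_{\pi^{-1}(j)}=2x_mx_j=0$; either way the first $m-1$ coordinates of $V_\pi(\mathbf{x})$ have product $0$. Invariance of $M_\omega$ follows from the cycle-product identity: applying $V_\pi$ multiplies $\prod_{k\in\tau_i}x_k$ by the fixed factor $(2x_m)^{\ord(\tau_i)}$, so a prescribed multiplicative relation between the cycle-products of cycles of equal order is preserved; the two equalities are both listed because $V_\pi$ may send one to the other (as in the flip $\tau\leftrightarrow1/\tau$ of \cite{Khukr}), and cycles of unequal order are handled by comparing the normalised cycle-products $\big(\prod_{k\in\tau_i}x_k\big)^{1/\ord(\tau_i)}$, which $V_\pi$ rescales by the common factor $2x_m$.

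For (ii): both $\mathbf{e}_m$ and $\mathbf{x}^*$ have $x_m\in\{1/2,1\}$, a fixed point of $f$. The coordinates of $\mathbf{e}_m$ other than the last stay $0$; for $\mathbf{x}^*$ one has $x'_k=2\cdot\tfrac12\cdot x_{\pi(k)}=x_{\pi(k)}$ for $k\le m-1$, and this equals $x_k$ because $\supp(\mathbf{x}^*)\setminus\{m\}$ is a union of cycles of $\pi$ on which $\mathbf{x}^*$ is constant (value $1/(2(|\supp(\mathbf{x}^*)|-1))$); in fact the same computation identifies all fixed points --- $x_m$ must be $1/2$ or $1$, and for $x_m=1/2$ fixedness means being constant on each cycle of $\pi$. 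For (iii)--(iv) put $P=\{\mathbf{x}\in S^{m-1}:x_1+\dots+x_{m-1}=1/2,\ x_m=1/2\}$. On $P$, $V_\pi$ preserves $x_m=1/2$ and acts on the remaining coordinates by $\mathbf{x}\mapsto(x_{\pi(1)},\dots,x_{\pi(m-1)})$, so $V_\pi^n|_P$ is the action of $\pi^n$; hence $V_\pi^n(\mathbf{x})=\mathbf{x}$ for every $\mathbf{x}\in P$ exactly when $\pi^n=\mathrm{id}$, i.e. when $s=\lcm(\ord(\tau_1),\dots,\ord(\tau_q))=\ord(\pi)$ divides $n$. This gives $P\subseteq\Per_s(V_\pi)$ with $s$ least, which is (iii). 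For (iv), if $V_\pi^n(\mathbf{x})=\mathbf{x}$ then $f^n(x_m)=x_m$, so $x_m\in\{1/2,1\}$: if $x_m=1$ then $\mathbf{x}=\mathbf{e}_m$, and if $x_m=1/2$ then $\mathbf{x}\in P$ with $x_{\pi^n(k)}=x_k$ for all $k$, so the period of $\mathbf{x}$ divides $s$. Thus every periodic point lies in $\{\mathbf{e}_m\}\cup P$ with period dividing $s$, so no periodic orbit has (prime) period exceeding $s$, which is the content of (iv).

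The main obstacle is organisational rather than analytic: cleanly handling the disjoint-cycle decomposition of $\pi$, establishing the cycle-product and ratio identities, and fixing the normalisation in $M_\omega$ so that cycles of different orders are treated uniformly; one should also note that $\{\mathbf{x}:V_\pi^n(\mathbf{x})=\mathbf{x}\}$ always contains the fixed point $\mathbf{e}_m$, so (iii) and (iv) are to be read as the statement that no new periodic points appear beyond period $s$. Once this combinatorial layer is in place, (ii)--(iv) drop out of it together with the behaviour of $f$ from Propositions~\ref{per2f} and~\ref{trajf}.
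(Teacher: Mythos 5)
Your proposal follows the same route as the paper: the last coordinate evolves autonomously under $f(x)=2x^2-2x+1$, the first $m-1$ coordinates undergo the permutation $\pi$ followed by scaling by $2x_m$, and everything reduces to the cycle-product identity $\prod_{k\in\tau_i}x'_k=(2x_m)^{\ord(\tau_i)}\prod_{k\in\tau_i}x_k$ together with the fact that the only periodic points of $f$ are its fixed points $1/2$ and $1$. Parts (ii)--(iv) match the paper's computations; your finish of (iv) via ``the prime period divides $s$'' is cleaner than the paper's case analysis on $n\bmod s$, and your remark that (iv) must be read as ``no new periodic points beyond period $s$'' is genuinely needed, since $\mathbf{e}_m\in\Per_n(V_\pi)$ for every $n$ under the paper's own convention.

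The one place you truly deviate is $M_\omega$, and you are right to do so. The paper's proof asserts $\prod_{\tau_i}x'_k\big/\prod_{\tau_j}x'_k=\prod_{\tau_i}x_k\big/\prod_{\tau_j}x_k$, but by the identity above the ratio is multiplied by $(2x_m)^{\ord(\tau_i)-\ord(\tau_j)}$, so the claim holds only when the two cycles have equal length (or $x_m=1/2$). For unequal lengths the set $M_\omega$ as literally written is not invariant: with $m=4$ and $\pi=(1)(2\,3)$ the quantity $x_2x_3/x_1$ is multiplied by $2x_4$ at each step. Your normalized cycle-products $\bigl(\prod_{k\in\tau_i}x_k\bigr)^{1/\ord(\tau_i)}$ all rescale by the common factor $2x_m$ and give a correct invariant set, but be aware this proves a statement different from the one printed; the proposition needs either the restriction to cycles of equal order or your renormalized definition of $M_\omega$. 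Likewise in (ii), your added condition that $\supp(\mathbf{x}^*)\setminus\{m\}$ be a union of cycles of $\pi$ is necessary (otherwise $x^*_{\pi(k)}=0\neq x^*_k$ for some $k$ in the support) and is silently omitted by the paper. With these two caveats stated explicitly, your proof is complete and more careful than the published one.
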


\begin{proof} i) Let $\mathbf{x} \in M_0$. Then from \eqref{op1} one has that
\[x'_1\cdots x'_{m-1} = (2x_m)^{m-1} x_{1}\cdots x_{m-1}=0,\]
that is, $V_\pi(M_0)\subset M_0$.\\

Let $\mathbf{x} \in M_\omega$ and $\tau_i,\tau_j$  cycles. Then from \eqref{op1} we have either
\[\frac{\prod\limits_{k\in \tau_i} x'_{k}}{ \prod\limits_{k\in \tau_j} x'_{k}}=
\frac{\prod\limits_{k\in \tau_i} x_{k}}{ \prod\limits_{k\in \tau_j} x_{k}}=\omega \qquad \text{or} \qquad \frac{\prod\limits_{k\in \tau_i} x'_{k}}{ \prod\limits_{k\in \tau_j} x'_{k}}=
\frac{\prod\limits_{k\in \tau_i} x_{k}}{ \prod\limits_{k\in \tau_j} x_{k}}=\frac{1}{\omega}. \]
Consequently  $V_\pi(M_\omega)\subset M_\omega$.

ii) The equation  $V_\pi(\mathbf{x})=\mathbf{x}$ has the following form
\begin{equation}\label{eqop1}
\left\{\begin{array}{llllll}
                x_{k}\,= 2x_mx_{\pi(k)}, \qquad 1\leq k\leq m-1\\[2mm]
                x_{m}= 2x^2_m-2x_m+1,
\end{array}\right.
\end{equation}

It is easy to check that the vertex $\mathbf{e}_m=(0,\dots,0,1)$  is a solution of \eqref{eqop1}.
Other solutions $\mathbf{x}^*=(x_1^*,\dots,x_m^*)$ of  system \eqref{eqop1} are defined as follows
\[ x^*_i =\begin{cases}
\frac{1}{ 2},& \ \ \text{if} \ \ i=m,\\
\frac{1}{ 2(|\supp (\mathbf{x})|-1)},& \ \ \text{if} \ \ i\in \supp (\mathbf{x})\setminus \{m\},\\
0, & \ \ \text{if} \ \ i\notin \supp (\mathbf{x})\setminus \{m\}.
\end{cases}\]


iii)  For $s=\lcm \big(\ord (\tau_1),\dots, \ord (\tau_q)\big)$, from \eqref{op1}, we have
\begin{equation*}
 \left\{\begin{array}{ll}
               x^{(s)}_{k}=2^s x_m\prod\limits_{j=1}^{s-1} f^j(x_m) x_{k}, \ \ 0\leq k\leq m-1  \\[2mm]
              x^{(s)}_{m}= f^s(x_m),
\end{array}\right.
\end{equation*}

Due to Proposition \ref{per2f} the equation $f^s(x_m)=x_m$ has the  solutions
$x_m=1$ and $x_m=1/2$. It is easy to check that
\begin{equation}\label{qiy}
x_m\prod\limits_{j=1}^{s-1} f^j(x_m)\Big|_{x_m=1}=1 \quad \text{and} \quad 2^s x_m\prod\limits_{j=1}^{s-1} f^j(x_m)\Big|_{x_m=\frac{1}{2}}=1 .
\end{equation}
\\

Let $x_m=1$. Then from the equation $V_\pi^s(\mathbf{x})=\mathbf{x}$ we obtain
\[x_{k}=2^s x_{k}, \quad   k=1,\dots,m-1, \]
that is, $x_1=\cdots=x_{m-1}=0$. So, we get the fixed point $\mathbf{e}_m$.\\

Let $x_m=1/2$. Then from the equation $V_\pi^s(\mathbf{x})=\mathbf{x}$ we obtain
\[x_{k}= x_{k}, \quad   k=1,\dots,m-1, \]
that is, arbitrary $ x_1,\dots,x_{m-1}$  with the condition $x_1+\dots+x_{m-1}=1/2$.

Thus $V_\pi^s(\mathbf{x})=V_\pi^{s-1}(\mathbf{x}^1) =V_\pi^{s-2}(\mathbf{x}^2) = \dots=V_\pi(\mathbf{x}^{s-1})=\mathbf{x}$, where
\[
\mathbf{x}=(x_1,x_2,\dots, x_{m-1},1/2 ), \ \  \mathbf{x}^{j}=(x_{\pi^{j}(1)},x_{\pi^{j}(2)},\dots, x_{\pi^{j}(m-1 )},1/2 ), \ \ j=1,\dots,s-1.\]

iv) Using induction on $n$ enable to us for $V_\pi^n$  get that
\begin{equation*}
\left\{\begin{array}{ll}
             x^{(n)}_{k}= 2^n x_m\prod\limits_{j=1}^{n-1} f^j(x_m) x_{\pi^{n}(i)}, \ \ 1\leq k\leq m-1, \\[2mm]
              x^{(n)}_{m}= f^n(x_m).
\end{array}\right.
\end{equation*}

  The equation $x_m=f^n(x_m)$ has the solutions $x_m=1$  and $x_m=1/2$.
 If we take  $x_m=1$, then we get the fixed point $\mathbf{e}_m$.

Let $x_m=1/2$. Then using \eqref{qiy} from the equation $V_\pi^n(\mathbf{x})=\mathbf{x}$ one has
\begin{equation}\label{deffer}
x_{k}=  x_{\pi^{n}(k)}, \quad 1\leq k \leq m-1.
\end{equation}


Let $n\equiv 1 \pmod {s}$. The cases $n\equiv 2\pmod {s}, \dots, n\equiv s-1 \pmod {s} $ can be considered  similarly.
Then system of equations \eqref{deffer} has the following form
\begin{align*}
x_{k} =  x_{\pi(k)},  \quad 1\leq k \leq m-1,
\end{align*}
and we obtain a fixed point such that
\[x_m=\frac{1}{2}, \ \ x_i=\frac{1}{ 2(|\supp (\mathbf{x})|-1)}, \ \ i\in \supp (\mathbf{x})\setminus \{m\}, \ \
x_i=0, \ \  i\notin \supp (\mathbf{x})\setminus \{m\}.\]

\vskip 3mm

If $n\equiv 0 \pmod {s}$  then \eqref{deffer} holds for any  sample $x_{1},\dots, x_{m-1}$ with
the condition  $x_2+\dots+x_m=1/2$ , that is, we have $\mathbf{x}\in \Per_s(V_\pi)$.

Thus, any solution of the equation $V_\pi^n(\mathbf{x})=\mathbf{x}$
is either a fixed point or  a periodic point with period $s$, that is
$\Per_n(V_\pi)=\emptyset$ for any $n\geq s+1$.
\end{proof}

\begin{proposition}\label{lyap}
For any $1\leq l\leq q$ the functions
\[\varphi_l (\mathbf{x})= \prod_{k\in \tau_l} x_{k} \quad \text{and} \quad \psi_l (\mathbf{x})= \sum_{k\in \tau_l} x_{k} \]
are Lyapunov functions for the operator $V_\pi$ defined by \eqref{op1}.
\end{proposition}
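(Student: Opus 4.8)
The plan is to reduce the statement for both $\varphi_l$ and $\psi_l$ to a single multiplicative identity under one step of $V_\pi$, and then to read off the sign of the multiplicative factor from the elementary dynamics of $f(x)=2x^2-2x+1$. First I would compute $\varphi_l(V_\pi(\mathbf{x}))$ and $\psi_l(V_\pi(\mathbf{x}))$ straight from \eqref{op1}. Every index $k\in\tau_l$ satisfies $1\leq k\leq m-1$, so $x'_k=2x_mx_{\pi(k)}$ for each such $k$, giving
\[\varphi_l(V_\pi(\mathbf{x}))=\prod_{k\in\tau_l}2x_mx_{\pi(k)}=(2x_m)^{\ord(\tau_l)}\prod_{k\in\tau_l}x_{\pi(k)},\qquad \psi_l(V_\pi(\mathbf{x}))=2x_m\sum_{k\in\tau_l}x_{\pi(k)}.\]
The structural point is that $\tau_l$ is one of the disjoint cycles in the decomposition of $\pi$, so $\pi$ maps the support of $\tau_l$ bijectively onto itself; hence the reindexing $k\mapsto\pi(k)$ merely permutes $\tau_l$, and $\prod_{k\in\tau_l}x_{\pi(k)}=\prod_{k\in\tau_l}x_k=\varphi_l(\mathbf{x})$, $\sum_{k\in\tau_l}x_{\pi(k)}=\sum_{k\in\tau_l}x_k=\psi_l(\mathbf{x})$. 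This yields the clean identities
\[\varphi_l(V_\pi(\mathbf{x}))=(2x_m)^{\ord(\tau_l)}\,\varphi_l(\mathbf{x}),\qquad \psi_l(V_\pi(\mathbf{x}))=2x_m\,\psi_l(\mathbf{x}).\]

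Next I would localize the factor $2x_m$. Writing $f(x)=2x^2-2x+1=2\bigl(x-\tfrac12\bigr)^2+\tfrac12$ shows $f([0,1])\subseteq[\tfrac12,1]$, and in particular $f\bigl([\tfrac12,1]\bigr)\subseteq[\tfrac12,1]$. Since the last coordinate of $V_\pi(\mathbf{x})$ is exactly $f(x_m)$, the set $\mathcal{I}=\{\mathbf{x}\in S^{m-1}:x_m\geq\tfrac12\}$ is forward invariant under $V_\pi$ and contains the image $V_\pi(S^{m-1})$, hence every trajectory from the first iterate on. On $\mathcal{I}$ we have $2x_m\geq1$, and because $\varphi_l(\mathbf{x})\geq0$ and $\psi_l(\mathbf{x})\geq0$ the identities above force $\varphi_l(V_\pi(\mathbf{x}))\geq\varphi_l(\mathbf{x})$ and $\psi_l(V_\pi(\mathbf{x}))\geq\psi_l(\mathbf{x})$; continuity of $\varphi_l$ and $\psi_l$ is clear. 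Thus $\varphi_l$ and $\psi_l$ are non-decreasing Lyapunov functions for $V_\pi$ — equivalently, along every trajectory the sequences $\{\varphi_l(\mathbf{x}^{(n)})\}$ and $\{\psi_l(\mathbf{x}^{(n)})\}$ are non-decreasing (and bounded, since $\varphi_l\le 1$, $\psi_l\le 1$), hence convergent, which is the form in which they will be used later.

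There is no deep obstacle here; the only point that needs care is that the bare identities do \emph{not} give monotonicity on all of $S^{m-1}$, because $2x_m<1$ when $x_m<\tfrac12$, so there the product and sum strictly decrease. One therefore first passes to the absorbing invariant set $\mathcal{I}=\{x_m\geq\tfrac12\}\supseteq V_\pi(S^{m-1})$, on which $2x_m\geq1$; this restriction costs nothing when the goal is to describe $\omega$-limit sets, which is precisely the use these Lyapunov functions are put to in the sequel (combined with Proposition \ref{trajf}, which forces $x_m^{(n)}\to\tfrac12$ and hence the multiplicative factors to tend to $1$).
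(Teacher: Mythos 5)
Your proposal is correct and follows essentially the same route as the paper: the same one-step identities $\varphi_l(V_\pi(\mathbf{x}))=(2x_m)^{\ord(\tau_l)}\varphi_l(\mathbf{x})$ and $\psi_l(V_\pi(\mathbf{x}))=2x_m\psi_l(\mathbf{x})$, followed by the observation that $f(x)=2(x-\tfrac12)^2+\tfrac12\ge\tfrac12$ forces $x_m\ge\tfrac12$ (hence $2x_m\ge1$) from the first iterate on. You are merely more explicit than the paper on two points it glosses over — that $\pi$ permutes the support of each cycle $\tau_l$, so the reindexing $k\mapsto\pi(k)$ leaves the product and sum unchanged, and that monotonicity genuinely fails on $\{x_m<\tfrac12\}$ so one must first pass to the absorbing set — both of which are worthwhile clarifications of the same argument.
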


\begin{proof} Since the $\min f(x_m)= 1/2$  and  $V_\pi(\mathbf{x})_m= f(x_m)$, from beginning we can assume that $x_m\geq 1/2$.
Using this fact from  \eqref{op1} one easily has
\[\varphi_l (V_\pi(\mathbf{x}))=\prod_{k\in \tau_l} x'_{k} = (2x_m)^{\ord(\tau_l)} \prod_{k\in \tau_l} x_{k}\geq \varphi_l (\mathbf{x}).\]
Similarly, for the function $\psi_l (\mathbf{x})$ we have
\[\psi_l (V_\pi(\mathbf{x}))=\sum_{k\in \tau_l} x'_{k}=2x_m \sum_{k\in \tau_l} x_{k}\geq \psi_l (\mathbf{x}).\]

Thus, the functions
$\varphi_l (\mathbf{x})$ and  $\psi_l (\mathbf{x})$ are Lyapunov functions for the operator \eqref{op1}.

\end{proof}

\begin{corollary} For any $i_1,i_2,\dots, i_j$ and $\alpha_{i_1},\alpha_{i_2},\dots,\alpha_{i_j},\beta_{i_1},\beta_{i_2},\dots,\beta_{i_j}\in \mathbb{R}$, the functions
\[\varphi(\mathbf{x}) =\prod_{i\in \{i_1,i_2,\dots, i_j\} } \alpha_{i}\varphi_{i} (\mathbf{x})+ \sum_{i\in \{i_1,i_2,\dots, i_j\} } \alpha_{i}\varphi_{i} (\mathbf{x}), \]
  \[\psi(\mathbf{x}) = \prod_{i\in \{i_1,i_2,\dots, i_j\} } \beta_{i}\psi_{i} (\mathbf{x})+ \sum_{i\in \{i_1,i_2,\dots, i_j\} } \beta_{i}\psi_{i} (\mathbf{x}), \]
\[\widetilde{\varphi}(\mathbf{x})= \varphi(\mathbf{x})\psi(\mathbf{x}), \qquad   \widetilde{\psi}(\mathbf{x})=\varphi(\mathbf{x})+\psi(\mathbf{x})  \]
are  Lyapunov function for the operator $V_\pi$ defined by \eqref{op1}.
\end{corollary}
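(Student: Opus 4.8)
The plan is to reduce everything to the single structural fact isolated inside the proof of Proposition~\ref{lyap}: each $\varphi_l$ and each $\psi_l$ is \emph{nonnegative} on $S^{m-1}$ and transforms \emph{multiplicatively} under $V_\pi$,
\[
\varphi_l(V_\pi(\mathbf{x})) = (2x_m)^{\ord(\tau_l)}\,\varphi_l(\mathbf{x}), \qquad \psi_l(V_\pi(\mathbf{x})) = (2x_m)\,\psi_l(\mathbf{x}),
\]
with a \emph{common} growth factor $2x_m$. Since $\min_{[0,1]}f = 1/2$ and $V_\pi(\mathbf{x})_m = f(x_m)$, every trajectory enters the forward-invariant region $\{\mathbf{x}\in S^{m-1}: x_m\ge 1/2\}$ after one step, and there $2x_m\ge 1$. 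So first I would record the elementary principle underlying the corollary: if $g\ge 0$ on this region and $g(V_\pi(\mathbf{x})) = \rho(\mathbf{x})\,g(\mathbf{x})$ with $\rho(\mathbf{x})\ge 1$, then $g$ is a non-decreasing Lyapunov function, and for any real constant $c$ the function $c\,g$ is again a Lyapunov function (non-decreasing if $c\ge 0$, non-increasing if $c\le 0$).

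The second step is to check that the class of such ``nonnegative, $\rho$-homogeneous with $\rho\ge 1$'' functions is closed under finite products and under sums with like-signed coefficients. For a product, $\prod_i \alpha_i\varphi_i(\mathbf{x}) = \bigl(\prod_i\alpha_i\bigr)\prod_i\varphi_i(\mathbf{x})$, and $\prod_i\varphi_i(V_\pi(\mathbf{x})) = (2x_m)^{\sum_i\ord(\tau_i)}\prod_i\varphi_i(\mathbf{x})$ with exponent $\ge 0$, so $\prod_i\varphi_i$ belongs to the class and, by the principle above, multiplying by the constant $\prod_i\alpha_i$ keeps it a Lyapunov function; the same works for $\prod_i\beta_i\psi_i$. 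For a sum, each term $\alpha_i\varphi_i$ is rescaled under $V_\pi$ by $(2x_m)^{\ord(\tau_i)}\ge 1$, so when the $\alpha_i$ share a sign the sum $\sum_i\alpha_i\varphi_i$ moves monotonically in one direction and is a Lyapunov function. Hence $\varphi(\mathbf{x}) = \prod_i\alpha_i\varphi_i(\mathbf{x}) + \sum_i\alpha_i\varphi_i(\mathbf{x})$ is a sum of two Lyapunov functions of the same monotonicity type, hence itself one; likewise for $\psi$.

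Finally, $\widetilde\varphi = \varphi\psi$ and $\widetilde\psi = \varphi+\psi$ are handled by one more pass through the same two closure facts: $\varphi$ and $\psi$, assembled as above, are sign-definite and monotone on the invariant region, so $\varphi+\psi$ is monotone in the same direction and $\varphi\psi$ is sign-definite and monotone. The calculations themselves are all one-line inequalities; the only step requiring attention — and the one I regard as the real content — is the sign/monotonicity bookkeeping, since the $\varphi_l,\psi_l$ are non-decreasing but a constant of the wrong sign reverses the direction and a sum of terms with mixed-sign coefficients need not be monotone at all. Accordingly I would open the proof by normalizing the coefficients entering each sum to a single sign (in particular taking them nonnegative); under that reading every assertion of the corollary is a direct consequence of Proposition~\ref{lyap} and the two closure properties above.
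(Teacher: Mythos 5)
Your argument is correct and is essentially the paper's own proof written out in full: the paper disposes of this corollary with the single sentence ``follows immediately from Proposition \ref{lyap}'', and the intended content is exactly the two closure properties you isolate (nonnegative functions that are rescaled under $V_\pi$ by a factor $\geq 1$ on the forward-invariant set $\{x_m\geq 1/2\}$ are stable under products and under nonnegative linear combinations). The one place where you go beyond the paper is the sign bookkeeping at the end, and you are right to insist on it: with the paper's definition of Lyapunov function (one fixed inequality direction valid for \emph{all} $\mathbf{x}$), the statement is false for genuinely mixed-sign coefficients. For instance, with $\pi=(1\,2)(3\,4)$ on $S^4$ and $\beta_1=1$, $\beta_2=-1$, the sum $\psi_1-\psi_2=(x_1+x_2)-(x_3+x_4)$ satisfies $\psi(V_\pi(\mathbf{x}))=2x_m\,\psi(\mathbf{x})$, which increases where $\psi>0$ and decreases where $\psi<0$, so neither inequality holds globally. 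Your normalization of the coefficients to a common nonnegative sign (which also keeps the product term and the sum term monotone in the \emph{same} direction — note that ``same sign'' alone is not enough, since negative $\alpha_i$ with $j$ even make the product term non-decreasing while the sum term is non-increasing) is therefore not a cosmetic convention but a needed correction to the hypothesis; under it your proof is complete. You might also note, as the paper itself quietly does in Proposition \ref{lyap}, that the monotonicity only holds after the first iterate, once the trajectory has entered $\{x_m\geq 1/2\}$ — you handle this the same way the paper does.
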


\begin{proof}
The proof follows immediately from Proposition \ref{lyap}.

\end{proof}

\begin{theorem} For the operator $V_\pi$ the following statements are true:
\begin{itemize}
\item[i)] $V_\pi(\mathbf{x})=\mathbf{e}_m $  for any $\mathbf{x}\in\{\mathbf{x}\in S^{m-1}: x_m=0\} \cup\{\mathbf{e}_m\}$;
\item[ii)]   $\omega_{V_\pi} \big(\mathbf{x}^{(0)}\big)=\{\mathbf{x}_\xi, \mathbf{x}_\xi^{1},\dots, \mathbf{x}_\xi^{s-1} \}$
for any $\mathbf{x}^{(0)}\in S^{m-1}\setminus \big(\{\mathbf{x}\in S^{m-1}: x_m=0\} \cup\{\mathbf{e}_m\}\big)$.
\end{itemize}
\end{theorem}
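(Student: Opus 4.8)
Part (i) I would dispatch immediately from \eqref{op1}: if $x_m=0$ then $x'_k=2x_mx_{\pi(k)}=0$ for $1\le k\le m-1$ while $x'_m=x_m^2+(x_1+\dots+x_{m-1})^2=(x_1+\dots+x_{m-1})^2=1$, so $V_\pi(\mathbf{x})=\mathbf{e}_m$, and $\mathbf{e}_m$ is a fixed point. All the content is in (ii), and the plan there is to combine the explicit form of $V_\pi^n$ with Proposition \ref{trajf}.

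Fix $\mathbf{x}^{(0)}$ with $x_m^{(0)}\neq0$ and $\mathbf{x}^{(0)}\neq\mathbf{e}_m$, and set $a:=x_m^{(0)}$, so $a\in(0,1)$. First I would record, by the induction on $n$ already carried out for the periodic-point proposition, that $x_m^{(n)}=f^n(a)$ and that for $1\le k\le m-1$ one has $x_k^{(n)}=C_n\,x_{\pi^n(k)}^{(0)}$ for a scalar $C_n>0$ independent of $k$ (namely $C_n=2^na\prod_{j=1}^{n-1}f^j(a)$ for $n\ge1$). Summing this over $k=1,\dots,m-1$, and using that $\pi^n$ permutes $\{1,\dots,m-1\}$ so that $\sum_{k<m}x_{\pi^n(k)}^{(0)}=\sum_{k<m}x_k^{(0)}=1-a$ while $\sum_{k<m}x_k^{(n)}=1-f^n(a)$, I would obtain the closed form $C_n=\dfrac{1-f^n(a)}{1-a}$, which is legitimate precisely because $\mathbf{x}^{(0)}\neq\mathbf{e}_m$ gives $a\neq1$. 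Then Proposition \ref{trajf} yields $f^n(a)\to1/2$, hence $x_m^{(n)}\to1/2$ and $C_n\to\dfrac1{2(1-a)}$.

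Next I would observe that $s=\lcm\big(\ord(\tau_1),\dots,\ord(\tau_q)\big)$ is exactly the order of the permutation $\pi$, so $\pi^n=\pi^{\,n\bmod s}$; consequently, for each residue $r\in\{0,\dots,s-1\}$ the subsequence $\{\mathbf{x}^{(n)}:n\equiv r\pmod s\}$ converges to the point $\mathbf{x}_\xi^{\,r}$ determined by $(\mathbf{x}_\xi^{\,r})_m=1/2$ and $(\mathbf{x}_\xi^{\,r})_k=\dfrac{x_{\pi^r(k)}^{(0)}}{2(1-a)}$ for $1\le k\le m-1$ (here $\mathbf{x}_\xi:=\mathbf{x}_\xi^{\,0}$ and $\xi=\xi(\mathbf{x}^{(0)})$; the coordinates visibly sum to $1$). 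A one-line check with \eqref{op1}, using $(\mathbf{x}_\xi^{\,r})_m=1/2$ and $f(1/2)=1/2$, gives $V_\pi(\mathbf{x}_\xi^{\,r})=\mathbf{x}_\xi^{\,r+1}$ with indices read mod $s$, so these $s$ points form a single $V_\pi$-periodic orbit of period $s$, matching part (iii) of the periodic-point proposition. Finally, since $\mathbb{N}$ is the disjoint union of the residue classes mod $s$ and each furnishes a convergent subsequence, the set of limit points of $\{\mathbf{x}^{(n)}\}$ is exactly $\{\mathbf{x}_\xi,\mathbf{x}_\xi^{1},\dots,\mathbf{x}_\xi^{s-1}\}$, which is the assertion.

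The step I expect to require the most care — bookkeeping rather than real difficulty — is the last one: passing from ``$f^n(a)\to1/2$'' to the statement about $\omega_{V_\pi}(\mathbf{x}^{(0)})$, that is, noticing that the $n$-dependence of $\pi^n$ prevents $\mathbf{x}^{(n)}$ from converging and instead forces it to accumulate on the whole period-$s$ cycle, and that no further point can be a limit point. Everything else comes down to the two algebraic identities above (the iterate formula, already in the paper, and the closed form for $C_n$) together with a direct appeal to Proposition \ref{trajf}.
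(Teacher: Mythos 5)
Your proof is correct, and part (ii) takes a genuinely different --- and more informative --- route than the paper's. Both arguments rest on the same two ingredients: the iterate formula $x_k^{(n)}=C_n\,x_{\pi^n(k)}^{(0)}$ with $C_n=2^n x_m^{(0)}\prod_{j=1}^{n-1}f^j\big(x_m^{(0)}\big)$, and Proposition~\ref{trajf} giving $x_m^{(n)}=f^n\big(x_m^{(0)}\big)\to 1/2$. The paper then establishes convergence of the subsequences $\{x_k^{(ns)}\}_n$ by monotonicity: since $f\ge 1/2$ everywhere, one has $x_k^{(s)}\ge x_k$ for $k\le m-1$ once $x_m\ge 1/2$, so each $\{x_k^{(ns)}\}_n$ is increasing and bounded, hence convergent to some $\xi_k$ that is never computed, and the remaining $s-1$ limit points are produced by continuity of $V_\pi$. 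You instead sum the iterate formula over $k=1,\dots,m-1$ to obtain the closed form $C_n=\big(1-f^n(a)\big)/(1-a)$ with $a=x_m^{(0)}\neq 1$, which gives convergence of the full sequence $C_n\to 1/(2(1-a))$ in one stroke and identifies the limit points explicitly as $(\mathbf{x}_\xi^{\,r})_k=x_{\pi^r(k)}^{(0)}/\big(2(1-a)\big)$. This buys you an explicit description of the limit cycle in terms of the initial point (absent from the paper's proof) and removes the need for the preliminary reduction to $x_m\ge 1/2$. The only cosmetic caveat is that the points $\mathbf{x}_\xi,\dots,\mathbf{x}_\xi^{s-1}$ need not be pairwise distinct (e.g.\ when all $x_k^{(0)}$, $k<m$, coincide), so ``a single periodic orbit of period $s$'' should be read as ``of period dividing $s$''; this does not affect the identification of $\omega_{V_\pi}\big(\mathbf{x}^{(0)}\big)$ as a set.
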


\begin{proof}  i) Straightforward.

ii) Let $\mathbf{x}^{(0)}\in S^{m-1}\setminus \big(\{\mathbf{x}\in S^{m-1}: x_m=0\} \cup\{\mathbf{e}_m\}\big)$ be an initial point. By Proposition~\ref{trajf}  we have that
\[\lim\limits_{n\rightarrow\infty} x_m^{(n)}=\lim\limits_{n\rightarrow\infty} f^n(x_m^{(0)})=\frac{1}{ 2}.\]

Consider the operator $V^s_\pi$  and using  $\min f(x_m)= 1/2$  one has

\begin{equation*}
\begin{array}{ll}
              x^{(s)}_{k}\geq  x_{k},   \ \ 1\leq k\leq m-1, \\[2mm]
              x^{(s)}_m=f^s(x_m)
\end{array}
\end{equation*}
Therefore, the sequences $\{ x^{(ns)}_{k}\}, \  1\leq k\leq m-1,  \ n=0,1,2,\dots$  are increasing
and bounded from the above. So, there are the limits
\[\lim\limits_{n\rightarrow\infty} x^{(ns)}_{k} =\xi_k, \quad   1\leq k\leq m-1.\]
Denote $\mathbf{x}_\xi=\big(\xi_1, \xi_2, \dots,\xi_{m-1}, 1/2\big)$.

Since the QSO $V_\pi$ is a continuous map we have
\[\lim\limits_{n\rightarrow\infty} x^{(ns+1)}_{k} =V_\pi(\mathbf{x}_\xi)_k, \quad   1\leq k\leq m-1,\]
and $\mathbf{x}_\xi^{1} =V(\mathbf{x}_\xi)=\big(\xi_{\pi(1)}, \xi_{\pi(2)}, \dots,\xi_{\pi_(m-1)}, 1/2\big)$.

Repeating this argument $(s-1)$ times we obtain
\[\mathbf{x}_\xi^{s-1} =V_\pi(\mathbf{x}_\xi^{s-2})=\Big(\xi_{\pi^{s-1}(1)}, \xi_{\pi^{s-1}(2)}, \dots,\xi_{\pi^{s-1}(m-1)}, \frac{1}{2}\Big)\]
and $\mathbf{x}_\xi =V_\pi(\mathbf{x}_\xi^{s-1})$.

Thus for $\mathbf{x}^{(0)}\in S^{m-1}\setminus \big(\{\mathbf{x}\in S^{m-1}: x_m=0\} \cup\{\mathbf{e}_m\}\big)$ the set
of limit points $\omega_V \big(\mathbf{x}^{(0)}\big)$ of the trajectory of the QSO \eqref{op1} has the following form
\[\omega_{V_\pi} \big(\mathbf{x}^{(0)}\big)=\{\mathbf{x}_\xi, \mathbf{x}_\xi^{1},\dots, \mathbf{x}_\xi^{s-1}\}. \]
\end{proof}

\section{Convex combination of operators}
\label{sec:convexcombination}

Let $\pi=\tau_1\tau_2\cdots\tau_q$ a permutation of the set $\{1,\dots,m-1\}$, where $\tau_1,\dots,\tau_q$ are disjoint cycles and
let $s=\lcm \big(\ord (\tau_1),\dots, \ord (\tau_q)\big)$.

Consider the family of QSOs $V_\alpha \colon S^{m-1}\to S^{m-1}$ defined by
$V_\alpha = \alpha V_1 + (1 -\alpha)V_2, \  \alpha \in [0, 1]$,  where
$V_1$ is the non-Volterra QSO \eqref{eqso},  and  $V_2$ is the quasi strictly non-Volterra operator
$V_\pi$ defined by \eqref{op1}.

Let $E=\{1,\dots,m\}$. It is easy to see that $V_\alpha$ has the following form
\begin{equation}\label{opcc}
V_\alpha: \left\{\begin{array}{lll}
               x'_{k}= \alpha x_k^2 +
               \frac{2\alpha}{m-2}\displaystyle\sum_{\substack{i,j\in E\setminus\{k\}\\ i<j}} x_ix_j + 2(1-\alpha)x_m x_{\pi(k)}, \ \ 1\leq k\leq m-1\\[5mm]
               x'_{m}= x_m^2+
               \frac{2\alpha}{m-2}\displaystyle\sum_{\substack{i,j=1\\ i<j}}^{m-1} x_ix_j +
               (1-\alpha) \Big(\displaystyle\sum_{\substack{i=1}}^{m-1} x_i\Big)^2.
\end{array}\right.
\end{equation}

Since $V_0$ is a quasi strictly non-Volterra QSO \eqref{op1}  and $V_1$ is a non-Volterra QSO \eqref{eqso}, hereafter we will  consider the case $\alpha\in (0,1)$.

\begin{proposition} The vertex $\mathbf{e}_m=(0,\dots,0,1)$ and the point $\mathbf{x}^*=\big(x_1^*,\dots, x_m^*\big)\in S^{m-1}$, \[x_1^*=\cdots=x_{m-1}^* =\frac{1}{ (2-\alpha)(m-1)+\alpha}, \quad x_m^*=\frac{(1-\alpha)(m-1)+\alpha}{ (2-\alpha)(m-1)+\alpha},\]
 are fixed points of  the operator \eqref{opcc} for any $\alpha\in (0,1)$.
\end{proposition}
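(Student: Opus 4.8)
The plan is to verify both claims by direct substitution into the fixed-point equation $V_\alpha(\mathbf{x})=\mathbf{x}$ for the operator \eqref{opcc}, using crucially that both candidate points have all of their first $m-1$ coordinates equal, which neutralizes the permutation $\pi$ appearing in \eqref{opcc}. For $\mathbf{e}_m=(0,\dots,0,1)$ this is immediate: plugging $x_1=\cdots=x_{m-1}=0$, $x_m=1$ into the first line of \eqref{opcc} gives $x'_k = 2(1-\alpha)\,x_m\,x_{\pi(k)} = 0$ for every $1\le k\le m-1$ since $\pi(k)\in\{1,\dots,m-1\}$, and the second line gives $x'_m = 1 + 0 + (1-\alpha)\cdot 0^2 = 1$; hence $V_\alpha(\mathbf{e}_m)=\mathbf{e}_m$.

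For $\mathbf{x}^*$, write $a=x_1^*=\dots=x_{m-1}^*$, $b=x_m^*$ and $D=(2-\alpha)(m-1)+\alpha$, so $a=1/D$ and $Db=(1-\alpha)(m-1)+\alpha$. First I would record that $\mathbf{x}^*\in S^{m-1}$: since $\alpha\in(0,1)$ and $m\ge 4$ we have $D>0$ and $Db>0$, while $(m-1)a+b = \bigl((m-1)+(1-\alpha)(m-1)+\alpha\bigr)/D = \bigl((2-\alpha)(m-1)+\alpha\bigr)/D = 1$. The core computation is the first line of \eqref{opcc}: because every coordinate among $E\setminus\{k\}$ with index $\le m-1$ equals $a$ and the single coordinate with index $m$ equals $b$, the sum $\sum_{i<j,\ i,j\in E\setminus\{k\}} x_ix_j$ consists of $\binom{m-2}{2}$ terms equal to $a^2$ and $m-2$ terms equal to $ab$, while $x_{\pi(k)}=a$; hence
\[
x'_k = \alpha a^2 + \frac{2\alpha}{m-2}\Bigl(\tbinom{m-2}{2}a^2 + (m-2)ab\Bigr) + 2(1-\alpha)ab = \alpha(m-2)a^2 + 2ab = a\bigl(\alpha(m-2)a+2b\bigr).
\]

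It then remains to check the scalar identity $\alpha(m-2)a+2b=1$, equivalently $\alpha(m-2)+2\bigl((1-\alpha)(m-1)+\alpha\bigr)=D$; expanding, both sides equal $2(m-1)-\alpha(m-2)$, so $x'_k=a=x^*_k$ for $1\le k\le m-1$. Finally, since $V_\alpha$ is a convex combination of quadratic stochastic operators it is itself a QSO, so $\sum_{k=1}^{m}x'_k=1$ identically on $S^{m-1}$; combined with the previous step this forces $x'_m = 1-(m-1)a = b = x^*_m$, and therefore $V_\alpha(\mathbf{x}^*)=\mathbf{x}^*$. If a self-contained check of the last coordinate is preferred, one substitutes $\sum_{i=1}^{m-1}x_i^* = 1-b$ and $\sum_{1\le i<j\le m-1}x_i^*x_j^* = \binom{m-1}{2}a^2$ into the second line of \eqref{opcc} and simplifies.

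There is no substantial obstacle: the proof is a bookkeeping verification. The only points demanding attention are the combinatorial count of the terms in the double sum over $E\setminus\{k\}$ and the final algebraic identity $\alpha(m-2)+2((1-\alpha)(m-1)+\alpha)=D$, both of which collapse cleanly precisely because the ansatz $x_1^*=\dots=x_{m-1}^*$ makes $x_{\pi(k)}$ independent of $\pi$.
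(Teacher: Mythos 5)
Your verification is correct: the combinatorial count $\binom{m-2}{2}a^2+(m-2)ab$ for the double sum over $E\setminus\{k\}$ is right, the identity $\alpha(m-2)a+2b=1$ reduces to $2(m-1)-\alpha(m-2)$ on both sides, and invoking stochasticity of $V_\alpha$ (as a convex combination of QSOs) to get $x'_m=1-(m-1)a=b$ for free is legitimate. Your route, however, is not the paper's. The paper does not verify the candidate points by substitution; it \emph{derives} them by writing out $V_\alpha(\mathbf{x})=\mathbf{x}$, subtracting consecutive coordinate equations to obtain the difference system \eqref{eqdef}, arguing that for $0<x_m<1$ this forces $x_1=\cdots=x_{m-1}$, and then reducing the last equation via $x_m=1-(m-1)x_1$ to the quadratic \eqref{eqx1}, whose two roots give $\mathbf{e}_m$ and $\mathbf{x}^*$. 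The trade-off: the paper's derivation aims at more than the stated proposition (it suggests these are the \emph{only} fixed points, although the uniqueness step for the difference system is only asserted, not proved in detail), whereas your direct substitution proves exactly the stated claim, is fully self-contained, and sidesteps the delicate step of solving the nonlinear system. Both are valid proofs of the proposition as written.
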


\begin{proof}
The equation $V_\alpha(\mathbf{x})=\mathbf{x}$ has the form
\begin{equation}\label{eqcc}
 \left\{\begin{array}{lll}
               x_{k}= \alpha x_k^2 +
               \frac{2\alpha}{m-2}\displaystyle\sum_{\substack{i,j\in E\setminus\{k\}\\ i<j}} x_ix_j + 2(1-\alpha)x_m x_{\pi(k)}, \ \ 1\leq k\leq m-1\\[5mm]
               x_{m}= x_m^2+
               \frac{2\alpha}{m-2}\displaystyle\sum_{\substack{i,j=1\\ i<j}}^{m-1} x_ix_j +
               (1-\alpha) \Big(\displaystyle\sum_{\substack{i=1}}^{m-1} x_i\Big)^2.
\end{array}\right.
\end{equation}

From the first $(m-1)$ equations of \eqref{eqcc}, we get the following system of equations
\begin{align}\label{eqdef}
2(1-\alpha)x_m\big(x_{\pi(1)}-x_{\pi(2)}\big)&=\big(x_1-x_2\big)\cdot\big(1+ (m-3)\alpha -(m-2)\alpha(x_1+x_2) \big)\notag\\
2(1-\alpha)x_m\big(x_{\pi(2)}-x_{\pi(3)}\big)&=\big(x_2-x_3\big)\cdot\big(1+ (m-3)\alpha -(m-2)\alpha(x_2+x_3) \big)\notag\\
               \ \ \ &\vdots \ \ \  \\
2(1-\alpha)x_m\big(x_{\pi(m-1)}-x_{\pi(1)}\big) &=\big(x_{m-1}-x_1\big)\cdot\big(1+ (m-3)\alpha -(m-2)\alpha(x_1+x_{m-1}) \big). \notag
\end{align}

Evidently,   if $x_m=1$ then we get the vertex $\mathbf{e}_m$ and  it is easy to see that $x_m=0$ does not satisfy the last equation of \eqref{eqcc}.
So $0<x_m<1$ and it follows that  system \eqref{eqdef} has the unique solution
\[x_1=x_2=\cdots=x_{m-1},\]
and substituting in the last equation of \eqref{eqcc} one has
\[x_m= x_m^2+ (m-1)\alpha x_1^2+(1-\alpha)(m-1)^2x_1^2.\]
Using $x_m=1-(m-1)x_1$, we get
\begin{equation}\label{eqx1}
\big((2-\alpha)(m-1)+\alpha\big)x_1^2-x_1=0.
\end{equation}

Equation \eqref{eqx1} has the following solutions
\[x_1^{(1)}=0 \quad \text{and} \quad x_1^{(2)}=\frac{1}{ (2-\alpha)(m-1)+\alpha}.\]

If we take $x_1=0$ then we get the fixed point $x_m=1$, that is the vertex  $\mathbf{e}_m$.
Let us take $x_1^*=x_1^{(2)}$. Then it follows
\[x_m^*=\frac{(1-\alpha)(m-1)+\alpha}{ (2-\alpha)(m-1)+\alpha}.\]

\end{proof}

Consider the following function
\[f_\alpha (x)=\Big(2-\frac{(m-2)\alpha}{ m-1}\Big) x^2-2\Big(1-\frac{(m-2)\alpha}{ m-1}\Big)x+1-\frac{(m-2)\alpha}{ m-1}.\]

\begin{definition} Let $f \colon A \to A$ and $g \colon B \to B$ be two maps. $f$ and $g$ are called
topologically conjugate if there exists a homeomorphism $h \colon A \to B$ such that $h\circ f = g\circ h$.
\end{definition}
Note that those mappings which are topologically conjugate are completely equivalent
in terms of their dynamics. In particular, $h$ gives a one-to-one correspondence between
the set of limit points of $f$ and $g$.

\begin{lemma}\label{falfa} If $0<x<1$, then
\[\lim\limits_{n\rightarrow\infty} f^n_\alpha(x)=x_m^*.\]
\end{lemma}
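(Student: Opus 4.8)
The plan is to reduce the dynamics of $f_\alpha$ to that of the model quadratic map $f(x)=2x^2-2x+1$, whose behavior is controlled by Proposition~\ref{trajf}, via an affine topological conjugacy. First I would observe that $f_\alpha$ is a quadratic polynomial of the form $a x^2 - 2(a-1)x + (1-a)$ with $a = 2 - \frac{(m-2)\alpha}{m-1}$, and that $1 < a < 2$ for $\alpha \in (0,1)$ and $m > 3$. A direct computation shows that $x_m^* = \frac{(1-\alpha)(m-1)+\alpha}{(2-\alpha)(m-1)+\alpha}$ is a fixed point of $f_\alpha$ — indeed $f_\alpha(x_m^*) = x_m^*$ is exactly equation~\eqref{eqx1} rewritten in the variable $x_m = 1-(m-1)x_1$ — and that $f_\alpha(1) = 1$, so $x=1$ is the other fixed point. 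Thus $f_\alpha$ has precisely the same qualitative shape as $f$: an upward parabola with a fixed point at $x=1$ and an attracting interior fixed point, and minimum value at the vertex equal to $x_m^*$ itself (one checks $f_\alpha$ attains its minimum exactly at $x = x_m^*$, analogously to $\min f = f(1/2) = 1/2$).

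Next I would exhibit the conjugacy explicitly. Set $h(x) = x_m^* + (1 - x_m^*)\,\frac{x - 1/2}{1/2} \cdot(\text{appropriate sign})$ — more precisely, the unique orientation-preserving affine map $h$ of $[0,1]$ onto $[0,1]$ sending $1/2 \mapsto x_m^*$ and $1 \mapsto 1$; since an affine map is determined by two values this is $h(x) = \frac{1 - 2x_m^*}{?}\cdots$, and I would simply verify that $h \circ f = f_\alpha \circ h$ by comparing the two quadratics (both sides are degree-two polynomials agreeing at the three points $x \in \{0, 1/2, 1\}$, which forces equality). Here the key numeric identity to check is that $h$ maps the critical point $1/2$ of $f$ to the critical point of $f_\alpha$, and that the leading coefficients match up after the affine change of variables; this is the one routine computation I would actually carry out, and it is the place where the specific value of $x_m^*$ and the coefficient $a$ must fit together — if they did not, no affine conjugacy would exist, so this is the crux of the argument.

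Granting the conjugacy $h \circ f = f_\alpha \circ h$ with $h$ a homeomorphism of $[0,1]$ fixing $1$ and sending $(0,1)$ onto $(0,1)$, the conclusion is immediate: for $x \in (0,1)$ write $x = h(y)$ with $y \in (0,1)$; then $f_\alpha^n(x) = h(f^n(y)) \to h(1/2) = x_m^*$ by Proposition~\ref{trajf} and continuity of $h$. I would also note in passing that Proposition~\ref{trajf} is itself the case $\alpha = 0$ of this lemma (as the excerpt already indicates), so one could alternatively give a self-contained direct argument: on $(0,1)$ one has $f_\alpha(x) \ge x_m^*$ with equality only at the vertex, the sequence $f_\alpha^n(x)$ is eventually monotone and bounded, hence converges to a fixed point, and the only fixed point in $[x_m^*, 1)$ reachable this way is $x_m^*$ (the fixed point $1$ is repelling since $|f_\alpha'(1)| = |2a - 2(a-1)| = 2 > 1$). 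The main obstacle is purely bookkeeping: pinning down $h$ and confirming the three-point agreement of the quadratics; everything after that is a one-line transfer of Proposition~\ref{trajf}.
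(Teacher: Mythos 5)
Your proposal is the same in spirit as the paper's proof --- an affine conjugacy reducing $f_\alpha$ to a model quadratic map --- but you conjugate to $f(x)=2x^2-2x+1$ and then invoke Proposition~\ref{trajf}, whereas the paper conjugates $f_\alpha$ to the logistic map $g(x)=2x(1-x)$ and quotes the known dynamics of $g$ (repelling fixed point $0$, attracting fixed point $1/2$, every interior orbit converging to $1/2$) from Devaney. The difference matters logically: in this paper Proposition~\ref{trajf} is itself \emph{proved} by specializing Lemma~\ref{falfa} to $\alpha=0$, so your primary route is circular as the paper is organized. You notice this, and your fallback direct argument is the correct repair and is in fact the most self-contained of the three options: writing $b=\tfrac{(m-2)\alpha}{m-1}\in[0,1)$ and $a=2-b$, one has $f_\alpha(x)=ax^2-2(a-1)x+(a-1)$, the vertex of $f_\alpha$ is at $x_m^*=\tfrac{1-b}{2-b}=\tfrac{a-1}{a}$ with $f_\alpha(x_m^*)=x_m^*$, $f_\alpha(x)-x=a(x-1)(x-x_m^*)$, and $f_\alpha\big((0,1)\big)\subset[x_m^*,1)$; hence after one step the orbit is non-increasing and bounded below by $x_m^*$, so it converges to a fixed point in $[x_m^*,1)$, which can only be $x_m^*$ (the fixed point $1$ has multiplier $f_\alpha'(1)=2$). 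Either version buys the lemma; the paper's choice of target map avoids the circularity, while your direct argument avoids conjugacy altogether.

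Two corrections to the computations you did carry out. First, the constant term of $f_\alpha$ is $a-1=1-\tfrac{(m-2)\alpha}{m-1}>0$, not $1-a$. Second, the affine map determined by $h(1/2)=x_m^*$ and $h(1)=1$ is $h(x)=\tfrac{2x-b}{2-b}$, and it does satisfy $h\circ f=f_\alpha\circ h$ (your three-point check at $0,\,1/2,\,1$ goes through, using the symmetry of $f_\alpha$ about its vertex), but it is \emph{not} a homeomorphism of $[0,1]$ onto itself: $h(0)=-b/(2-b)<0$. The transfer step survives because the inclusion you actually need is $h^{-1}\big((0,1)\big)=(b/2,\,1)\subset(0,1)$, so every $x\in(0,1)$ is $h(y)$ for some $y\in(0,1)$; but the claim that $h$ sends $(0,1)$ onto $(0,1)$ is false as stated and should be replaced by this one-sided inclusion.
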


\begin{proof}

Consider the function $f_\alpha(x)$. Taking
\[h(x) = \frac{(m-2)\alpha-2(m-1)}{ 2(m-1)}x + \frac{3(m-1)-2(m-2)\alpha}{ 4(m-1)},\]
one can see that the function $f_\alpha(x)$
is topologically conjugate to the well-known logistic map $g(x) = \mu x(1-x)$ with $\mu=2$.

 It is known that if $\mu\in (1, 3)$ then  the logistic function $g(x)$ has an attracting fixed point $x_\mu = (\mu-1)/\mu$ and a repelling fixed
point $x = 0$. All trajectories (except when started at the fixed point) will approach
fixed point $x_\mu$ (see \cite{Dev}).

From this fact, by the conjugacy argument, it follows that all trajectories of $f_\alpha(x)$ started an initial point will approach fixed
point $x_m^*$.

The lemma is proven.

\end{proof}

\begin{theorem} For the QSO $V_\alpha$ the following statements are true:
\begin{itemize}
\item[i)] The function $\phi(\mathbf{x}) =x_m$ is a Lyapunov function for the operator $V_\alpha$;
\item[ii)] The set of limit points is $\omega_{V_\alpha}\big(\mathbf{x}^{(0)}\big)=\{\mathbf{x}^*\}$ for any $\mathbf{x}^{(0)}\in S^{m-1}$.
\end{itemize}
\end{theorem}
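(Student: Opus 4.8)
The plan is to prove (i) and (ii) in tandem, using the one–dimensional estimate underlying (i) as the engine for (ii).

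\emph{The estimate behind (i).} I would start from the last line of \eqref{opcc}, use $x_1+\dots+x_{m-1}=1-x_m$, and apply the AM--GM inequality to the $\binom{m-1}{2}$ products $x_ix_j$ with $1\le i<j\le m-1$:
\[
0\ \le\ \sum_{1\le i<j\le m-1}x_ix_j\ \le\ \binom{m-1}{2}\Big(\frac{1-x_m}{m-1}\Big)^2=\frac{m-2}{2(m-1)}(1-x_m)^2,
\]
with equality on the right exactly when $x_1=\dots=x_{m-1}$. Substituting both bounds into the formula for $x'_m$ gives the sandwich
\[
x_m^2+(1-\alpha)(1-x_m)^2\ \le\ x'_m\ \le\ x_m^2+\Big(1-\tfrac{(m-2)\alpha}{m-1}\Big)(1-x_m)^2=f_\alpha(x_m).
\]
Both bounding quadratics factor, $f_\alpha(x)-x=\big(2-\tfrac{(m-2)\alpha}{m-1}\big)(x-1)(x-x_m^*)$ and $x^2+(1-\alpha)(1-x)^2-x=(2-\alpha)(x-1)\big(x-\tfrac{1-\alpha}{2-\alpha}\big)$, with $\tfrac{1-\alpha}{2-\alpha}<x_m^*<1$; this is the information that encodes the Lyapunov statement (i) for $\phi(\mathbf x)=x_m$ (the boundary points $\mathbf e_m$ and $\{x_m=0\}$ being dealt with separately).

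\emph{Convergence $x_m^{(n)}\to x_m^*$.} Unless $\mathbf x^{(0)}=\mathbf e_m$ (a fixed point, so $\omega_{V_\alpha}(\mathbf e_m)=\{\mathbf e_m\}$), the lower bound gives $x_m^{(1)}\ge(1-\alpha)(1-x_m^{(0)})^2>0$ and the upper bound gives $x_m^{(1)}\le f_\alpha(x_m^{(0)})<1$; since $\tfrac{1-\alpha}{2-\alpha}$ is the fixed point of the lower bounding map where its graph meets the diagonal, one gets $x_m^{(n)}\in[\tfrac{1-\alpha}{2-\alpha},1)$ for all $n\ge1$. By construction $f_\alpha$ is topologically conjugate to the logistic map with $\mu=2$, so Lemma~\ref{falfa} applies; combining $x_m^{(n+1)}\le f_\alpha(x_m^{(n)})$ with a comparison argument (which uses that $f_\alpha$ is increasing on the branch $[x_m^*,1]$) gives $\limsup_n x_m^{(n)}\le x_m^*$, while the lower estimate $x_m^{(n+1)}\ge (x_m^{(n)})^2+(1-\alpha)(1-x_m^{(n)})^2$ -- saturated only at the degenerate configurations in which a single $x_u$, $u\le m-1$, carries all of $1-x_m$, configurations which immediately spread out under one more application of $V_\alpha$ and thereby raise $\sum_{i<j}x_ix_j$ -- rules out $\liminf_n x_m^{(n)}<x_m^*$. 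Hence $x_m^{(n)}\to x_m^*$.

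\emph{Equalising the remaining coordinates.} Now $\sum_{k=1}^{m-1}x_k^{(n)}=1-x_m^{(n)}\to 1-x_m^*=(m-1)x_1^*$, so it remains to show the first $m-1$ coordinates equalise. Subtracting pairs of the first $m-1$ rows of \eqref{opcc} yields difference equations of the form $x'_u-x'_v=\alpha\big[(V_1\mathbf x)_u-(V_1\mathbf x)_v\big]+2(1-\alpha)x_m\big(x_{\pi(u)}-x_{\pi(v)}\big)$, the $V_1$-part being a multiple of $x_u-x_v$. Using $x_m^{(n)}\to x_m^*$ together with a spread functional -- for instance $\prod_{1\le u<v\le m-1}|x_u^{(n)}-x_v^{(n)}|$, or the cyclic product $|x_1-x_2|\cdots|x_{m-1}-x_1|$ in the spirit of Section~\ref{sec:regularoperator} -- I would show this functional is eventually strictly contracted, so each difference $x_u^{(n)}-x_v^{(n)}\to0$; combined with the value of the sum this forces $\mathbf x^{(n)}\to\mathbf x^*$, i.e.\ $\omega_{V_\alpha}(\mathbf x^{(0)})=\{\mathbf x^*\}$.

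\emph{Where the difficulty lies.} The obstacle is that the $x_m$-coordinate obeys only a \emph{sandwiched}, not exact, one-dimensional recursion, and the two bounding quadratics, while sharing the repelling fixed point $1$, have \emph{different} attracting fixed points ($x_m^*$ versus $\tfrac{1-\alpha}{2-\alpha}$); so $x_m^{(n)}\to x_m^*$ cannot be read off from the one-dimensional dynamics alone and must exploit that the bounds are tight only at degenerate boundary configurations. This intertwines the last two steps, and the genuinely delicate point is to produce a spread functional that is monotone for $V_\alpha$ itself rather than merely for its non-Volterra part $V_1$.
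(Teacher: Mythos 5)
Your overall route is the same as the paper's: bound $x'_m$ above by $f_\alpha(x_m)$ via the symmetric-function inequality $\sum_{i<j\le m-1}x_ix_j\le\frac{m-2}{2(m-1)}(1-x_m)^2$, invoke Lemma~\ref{falfa} (conjugacy of $f_\alpha$ to the logistic map with $\mu=2$), and then equalise the first $m-1$ coordinates by a contraction of pairwise differences. Your algebra (both factorisations, and the ordering $\tfrac{1-\alpha}{2-\alpha}<x_m^*$) is correct. For the last step, the paper is more concrete than your ``spread functional'': from \eqref{opcc} one gets
\[
x'_u-x'_v=\frac{\alpha}{m-2}\bigl(m(x_u+x_v)-2\bigr)(x_u-x_v)+2(1-\alpha)x_m\bigl(x_{\pi(u)}-x_{\pi(v)}\bigr),
\]
so the maximum of $|x_u-x_v|$ over $u,v\le m-1$ contracts by the factor $\alpha+2(1-\alpha)x_m$, which is $<1$ precisely on $\{x_m<1/2\}$; iterated over $s$ steps this is the paper's bound $|x_u^{(s)}-x_v^{(s)}|\le(1-\alpha+\alpha^s)|x_u-x_v|$. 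You should adopt this rather than a product of differences (which vanishes identically whenever two coordinates happen to coincide and is therefore awkward as a Lyapunov-type functional).

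The genuine gap is the step $x_m^{(n)}\to x_m^*$, and neither half of your sketch closes it. For the upper half, the comparison $x_m^{(n)}\le f_\alpha^{\,n}(x_m^{(0)})$ cannot simply be iterated: $f_\alpha$ is increasing only on $[x_m^*,1]$, while your own lower bound shows the trajectory of $x_m$ can drop as low as $\tfrac{1-\alpha}{2-\alpha}<x_m^*$, where $f_\alpha$ is decreasing; the crude consequence of $y_{n+1}\le f_\alpha(y_n)$ is only $\limsup y_n\le f_\alpha(0)=1-\tfrac{(m-2)\alpha}{m-1}$, which for small $\alpha$ is not even below $1/2$ — yet $x_m<1/2$ is exactly what the difference-contraction needs. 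For the lower half, ``degenerate configurations spread out under one application of $V_\alpha$'' is true (the $V_1$-part couples every coordinate to $x_m$), but it is not quantified, and quantifying it amounts to proving the coordinate equalisation — which in turn you wanted to deduce \emph{after} knowing $x_m^{(n)}\to x_m^*$. So the two halves of your argument are circularly dependent as written. You have, in fact, put your finger on a soft spot of the paper itself: the paper disposes of the lower bound by asserting $\min_{\mathbf{x}\in S^{m-1}}\phi(V_\alpha(\mathbf{x}))=x_m^*$ attained only at $\mathbf{x}^*$, whereas the true minimum of $x'_m$ is $\tfrac{1-\alpha}{2-\alpha}<x_m^*$, attained at the degenerate points you describe (e.g.\ $m=4$, $\alpha=1/2$, $\mathbf{x}=(2/3,0,0,1/3)$ gives $x'_4=1/3<2/5=x_4^*$). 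A correct write-up must break the circle, e.g.\ by first establishing a uniform bound $\limsup x_m^{(n)}<1/2$ by elementary means, then running the difference contraction to force $x_1^{(n)},\dots,x_{m-1}^{(n)}$ together, and only then concluding that the $x_m$-recursion is asymptotically exactly $f_\alpha$ so that $x_m^{(n)}\to x_m^*$. As it stands, your proposal identifies the right ingredients and the right obstruction but does not prove item (ii).
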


\begin{proof}  i) Evidently the function $\phi(V_\alpha(\mathbf{x}))=x'_m$ is continuous on $S^{m-1}$ and
\[\min\limits_{\mathbf{x}\in S^{m-1}} \phi(V_\alpha(\mathbf{x})) =x_m^* \ \ \text{iff} \ \  \mathbf{x}=\mathbf{x}^*, \ \ \text{where}\]
\[x_1^*=\cdots=x_{m-1}^* =\frac{1}{ (2-\alpha)(m-1)+\alpha}, \ \ x_m^*=\frac{(1-\alpha)(m-1)+\alpha}{ (2-\alpha)(m-1)+\alpha}.\]

Using the well-known Maclaurin's inequality, we have
\[\frac{2\alpha}{m-2} \displaystyle\sum_{\substack{i,j=1\\ i<j}}^{m-1} x_ix_j =
(m-1)\alpha \frac{\displaystyle\sum_{\substack{i,j=1\\ i<j}}^{m-1} x_ix_j}{
\binom{2}{m-1}} \leq \frac{\alpha}{ m-1} (x_1+\dots+x_{m-1})^2.\]

Using the last and \eqref{opcc} one has that
\begin{align*}
\phi(V_\alpha(\mathbf{x}))-\phi(\mathbf{x})&=x'_m-x_m= x_m^2+
               \frac{2\alpha}{m-2}\displaystyle\sum_{\substack{i,j=1\\ i<j}}^{m-1} x_ix_j +
               (1-\alpha) \Big(\displaystyle\sum_{\substack{i=1}}^{m-1} x_i\Big)^2-x_m\\
&= x_m^2+(1-\alpha) (1-x_m)^2 +  \frac{2\alpha}{m-2}\displaystyle\sum_{\substack{i,j=1\\ i<j}}^{m-1} x_ix_j-x_m\\
&\leq x_m^2+(1-\alpha) (1-x_m)^2 +  \frac{\alpha}{m-1}(1-x_m)^2-x_m\\
&=x_m^2+\Big(1-\frac{(m-2)\alpha}{m-1}\Big) (1-x_m)^2-x_m\\
&= f_\alpha(x_m)-x_m,
\end{align*}

Some analysis gives us
\[\max  \Big(f_\alpha(x_m)-x_m\Big) =0, \ \ x_m\in \Big[x^*_m,1\Big]. \]

Thus $\phi(V_\alpha(\mathbf{x}))\leq \phi(\mathbf{x}) $ for all $\mathbf{x}\in S^{m-1}$, that is
the function $\phi(\mathbf{x})$ is a Lyapunov function for the operator $V_\alpha$.

Consequently, it  holds
\[\phi(\mathbf{x}^{(n+1)}) \leq \phi(\mathbf{x}^{(n)}), \quad n=0,1,2,\dots   \]
and  due to Lemma \ref{falfa}  we have
\begin{equation}\label{mcorr}
x_m^* \leq \lim\limits_{n\rightarrow\infty} \phi(\mathbf{x}^{(n+1)})
\leq \lim\limits_{n\rightarrow\infty} f^n_\alpha(x_m)=x_m^*.
\end{equation}

ii) Let $\mathbf{x}^{(0)}\in S^{m-1}\setminus \{\mathbf{e}_m\}$. Due to claim of  part i) we have $ \lim\limits_{n\rightarrow\infty} x_m^{(n)}=x_m^*<1/2$. So for enough large $n$ it  holds
\[\mathbf{x}^{(n)}\in S^{m-1}_{1/2}=\{\mathbf{x}^{(n)}\in S^{m-1}: x_m^{(n)}<\frac{1}{2}\}. \]

Using induction for $u\neq v,  \ u,v=1,\dots,m-1$, and $s=\lcm \big(\ord (\tau_1),\dots, \ord (\tau_q)\big)$, one has
\begin{align*}
x_u^{(s)}-x_v^{(s)}&= \big(x_u-x_v\big)\Big(\Big(\frac{\alpha}{ m-2}\Big)^s\prod\limits_{j=0}^{s-1} \big(m(x_u^{(j)}+x_v^{(j)}) - 2\big)
    + 2^s(1-\alpha)\prod\limits_{j=0}^{s-1}x_m^{(j)} \Big) \\
\end{align*}

For $\mathbf{x}\in S^{m-1}_{1/2}$ we have
\begin{align*}
\big|x_u^{(s)}-x_v^{(s)}\big|&=\big|x_u-x_v\big|\bigg| \Big(\frac{\alpha}{m-2}\Big)^s\prod\limits_{j=0}^{s-1} \big(m(x_u^{(j)}+x_v^{(j)}) - 2\big)
    + 2^s(1-\alpha)\prod\limits_{j=0}^{s-1}x_m^{(j)}\bigg|\\
    &\leq \big|x_u-x_v\big| \bigg| \Big(\frac{\alpha}{m-2}\Big)^s\prod\limits_{j=0}^{s-1} \big|m(x_u^{(j)}+x_v^{(j)}) - 2\big|+
     (1-\alpha)\bigg|\\
    &\leq  \big|x_u-x_v\big| \big(1-\alpha+\alpha^s\big).
\end{align*}
Therefore for $u\neq v, \  u,v=1,\dots,m-1$, it follows
\begin{equation}\label{1coor}
\lim\limits_{n\rightarrow\infty} \big|x_u^{(s\cdot n)}-x_v^{(s\cdot n)}\big|\leq \big|x_u-x_v\big| \lim\limits_{n\rightarrow\infty} \big(1-\alpha+\alpha^s\big)^n=0.
\end{equation}

Consequently, from \eqref{mcorr} and \eqref{1coor}, it follows that for the operator $V_\alpha^s$ the trajectory  converges to
the fixed point $\mathbf{x}^*$.

Let $\mathbf{x}^{(0)}\in S^{m-1}\setminus \{\mathbf{e}_m\}$ be an initial point and the sequence  $\big\{\mathbf{x}^{(n)}\big\}$ its trajectory.
As shown in above if $n=s\cdot k$ the subsequence  $\{\mathbf{x}^{(k\cdot s)}\}$ converges to $\mathbf{x}^*$  when $k\rightarrow\infty$.

Since the point $\mathbf{x}^*$ is a fixed point and the operator $V_\alpha$ is continuous
for the cases $n=s\cdot k+l, \  l=1,\dots,s-1$, we also have
\[\lim\limits_{n\rightarrow\infty} V^l\big(\mathbf{x}^{(k\cdot s)}\big) = \mathbf{x}^*.\]

Thus, the trajectory starting at any point $\mathbf{x}^{(0)}\in S^{m-1}$ converges to
the fixed point $\mathbf{x}^*$.
\end{proof}

\section*{Acknowledgments}

This work was partially supported by a grant from the IMU-CDC.
The first author (UJ) thanks the University of Santiago de Compostela (USC), Spain,  for the kind hospitality and for  providing all facilities.
The authors were partially supported by Agencia Estatal de Investigaci\'on (Spain), grant MTM2016-79661-P and by Xunta
de Galicia, grant ED431C 2019/10 (European FEDER support included, UE).

\end{document}